\newtheorem{theorem}{Theorem}[section]
\newtheorem{lemma}[theorem]{Lemma}
\newtheorem{proposition}[theorem]{Proposition}
\theoremstyle{definition}
\newtheorem{example}[theorem]{Example}
\theoremstyle{remark}
\numberwithin{equation}{section}
\newcommand{\R}{\ensuremath{\mathbb{R}}}
\renewcommand{\u}{\mathcal{U}}
\newcommand{\set}[1]{\left\{#1\right\}}
\newcommand{\U}{\widetilde{\mathcal{U}}}
\newcommand{\f}{\infty}
\newcommand{\G}{\Gamma_\alpha}
\begin{document}

\title{Unique expansions and intersections of Cantor sets}

\author{Simon Baker}
\address{Department of Mathematics and Statistics, Whiteknights, Reading, RG6 6AX, UK}
\email{simonbaker412@gmail.com}

\author{Derong Kong}
\address{School of Mathematical Science, Yangzhou University, Yangzhou, Jiangsu 225002, People's Republic of China}
\email{derongkong@126.com}

\date{\today}

\subjclass[2010]{Primary 11A63; Secondary 37B10, 37B40, 28A78}


\begin{abstract}
To each $\alpha\in(1/3,1/2)$ we associate the Cantor set $$\Gamma_{\alpha}:=\Big\{\sum_{i=1}^{\infty}\epsilon_{i}\alpha^i: \epsilon_i\in\{0,1\},\,i\geq 1\Big\}.$$ In this paper we consider the intersection $\Gamma_\alpha \cap (\Gamma_\alpha + t)$ for any translation $t\in\R$. We pay special attention to those $t$ with a unique $\{-1,0,1\}$ $\alpha$-expansion, and study the set
 $$D_\alpha:=\{\dim_H(\Gamma_\alpha \cap (\Gamma_\alpha + t)):t \textrm{ has a unique }\{-1,0,1\}\,\alpha\textrm{-expansion}\}.$$We prove that there exists a transcendental number $\alpha_{KL}\approx 0.39433\ldots$ such that: $D_\alpha$  is finite for $\alpha\in(\alpha_{KL},1/2),$ $D_{\alpha_{KL}}$ is infinitely countable, and $D_{\alpha}$ contains an interval for $\alpha\in(1/3,\alpha_{KL}).$ We also prove that $D_\alpha$ equals $[0,\frac{\log 2}{-\log \alpha}]$ if and only if $\alpha\in (1/3,\frac {3-\sqrt{5}}{2}].$

As a consequence of our investigation we   prove some results on the possible values of $\dim_{H}(\Gamma_\alpha \cap (\Gamma_\alpha + t))$ when $\Gamma_\alpha \cap (\Gamma_\alpha + t)$ is a self-similar set. We also give examples of $t$ with a continuum of $\{-1,0,1\}$ $\alpha$-expansions for which we can explicitly calculate $\dim_{H}(\G\cap(\G+t)),$ and for which $\G\cap (\G+t)$ is a self-similar set. We also construct $\alpha$ and $t$ for which $\Gamma_\alpha \cap (\Gamma_\alpha + t)$ contains only transcendental numbers.

Our approach makes use of digit frequency arguments and a lexicographic characterisation of those $t$ with a unique $\{-1,0,1\}$ $\alpha$-expansion.
\end{abstract}

\keywords{Expansions in non-integer bases, Intersections of Cantor sets, Digit frequencies. }
\maketitle

\section{introduction}\label{sec:1}
 To each $\alpha\in(0,1/2)$ we associate the contracting similarities $f_0(x)=\alpha x$ and $f_{1}(x)=\alpha(x+1).$ The middle $(1-2\alpha)$ Cantor set $\Gamma_\alpha$ is defined to be the unique compact non-empty set satisfying the equation
$$\Gamma_{\alpha} =f_0(\Gamma_\alpha)+f_1(\Gamma_\alpha).$$ It is easy to see that the maps $\{f_0,f_1\}$ satisfy the strong separation condition. Thus $\dim_{H}(\Gamma_{\alpha})=\dim_{B}(\Gamma_{\alpha})=\frac{\log 2}{-\log \alpha},$ where $\dim_{H}$ and $\dim_{B}$ denote the Hausdorff dimension and box dimension respectively.

A natural and well studied question is ``What are the properties of  the intersection $\Gamma_\alpha\cap(\Gamma_\alpha+t)$?" This question has been studied by many authors. We refer the reader to  \cite{Kenyon_Peres_1991, Kraft_1992, Li_Xiao_1998, Kraft_2000, Kong_Li_Dekking_2010} and the references therein for more information. As we now go on to explain, when $\alpha\in(0,1/3]$ the set  $\Gamma_\alpha \cap (\Gamma_\alpha + t)$ is well understood, however when $\alpha\in(1/3,1/2)$ additional difficulties arise.

Note that $\Gamma_\alpha\cap(\Gamma_\alpha+t)\ne\emptyset$ if and only if $t\in\Gamma_\alpha-\Gamma_\alpha$. Thus it is natural to investigate the difference set $\Gamma_{\alpha}-\Gamma_{\alpha},$  which is the self-similar set generated by the iterated function system $\{f_{-1},f_0,f_1\},$ where $f_{-1}(x)=\alpha(x-1)$. Alternatively, one can write
$$ \Gamma_{\alpha}-\Gamma_{\alpha}:=\Big\{\sum_{i=1}^{\infty}\epsilon_{i}\alpha^i: \epsilon_i\in\{-1,0,1\},\,i\geq 1\Big\}.$$
Importantly, for $\alpha\in(0,1/3)$ each $t\in \Gamma_{\alpha}-\Gamma_{\alpha}$ has a unique \emph{$\alpha$-expansion} with \emph{alphabet} $\{-1,0,1\}$, i.e.,  there exists a unique sequence $(t_i)\in\{-1,0,1\}^{\mathbb{N}}$ such that $t=\sum t_i \alpha^{i}.$   When $\alpha=1/3$ there is a countable set of $t$ with precisely two $\alpha$-expansions. These $t$ are well understood and do not pose any real difficulties, thus in what follows we suppress the case where t has two $\alpha$-expansions.

For $\alpha\in(0,1/3]$ let $t\in \Gamma_\alpha - \Gamma_\alpha$ have a unique $\alpha$-expansion $(t_i)$. Then the sequence $(t_i)$ provides a useful description of the set $\Gamma_\alpha \cap (\Gamma_\alpha + t).$ Indeed, we can write (cf.~\cite{Li_Xiao_1998})
\begin{equation}
\label{eq:11}
\G\cap (\G+t)=\Big\{\sum_{i=1}^{\infty}\epsilon_i\alpha^i:\epsilon_i\in \{0,1\}\cap (\{0,1\}+t_i)\Big\}.
\end{equation}
 With this new interpretation many questions regarding the set $\G\cap (\G+t)$ can be reinterpreted and successfully answered using combinatorial properties of the $\alpha$-expansion $(t_i).$

The straightforward description of $\G\cap (\G+t)$ provided by \eqref{eq:11} does not exist for $\alpha\in(1/3,1/2)$ and a generic $t\in \G-\G.$ The set $\G - \G$ is still a self-similar set generated by the transformations $\{f_{-1},f_0.f_1\},$ however this set is now equal to the interval $[\frac{-\alpha}{1-\alpha},\frac{\alpha}{1-\alpha}]$ and the good separation properties that were present in the case where $\alpha\in(0,1/3]$ no longer exist. It is possible that a $t\in \G - \G$ could have many $\alpha$-expansions. In fact it can be shown that Lebesgue almost every $t\in \G - \G$ has a continuum of $\alpha$-expansions (cf.~\cite{Dajani_DeVries_2007, Sidorov_2003, Sidorov_2007}). Thus within the parameter space $(1/3,1/2)$ we are forced to have the following more complicated interpretation of $\G\cap (\G +t)$ (cf.~\cite[Lemma 3.3]{Li_Xiao_1998})
\begin{equation}
\label{eq:12}
\G\cap (\G +t)=\bigcup_{\tilde{t}}\Big\{\sum_{i=1}^{\infty}\epsilon_i\alpha^i:\epsilon_i\in \{0,1\}\cap (\{0,1\}+\tilde{t}_i)\Big\},
\end{equation} where the union is over all $\alpha$-expansions $\tilde{t}=(\tilde{t}_i)$ of $t$.  As stated above, for a generic $t$ this union is uncountable, this makes many questions regarding the set $\G\cap (\G+t)$ intractable. In what follows we focus on the case where $t$ has a unique $\alpha$-expansion. For these $t$ the description of $\G\cap (\G +t)$ given by \eqref{eq:12} simplifies to that given by \eqref{eq:11}.

We now introduce some notation. For $\alpha\in(0,1/2)$ let
 $$\u_\alpha:=\Big\{t\in \G-\G: t \textrm{ has a unique }\alpha\textrm{-expansion w.r.t. the aphabet}~\set{-1,0,1}\Big\}.$$  Within this paper one of our main objects of study is the following set
$$D_{\alpha}:=\Big\{\dim_H(\G\cap(\G+t)): t\in \u_\alpha\Big\}.$$ In particular we will prove the following theorems.

\begin{theorem}
\label{th:11}
There exists a transcendental number $\alpha_{KL}\approx 0.39433\ldots$ such that:
\begin{enumerate}
\item For $\alpha\in (\alpha_{KL},1/2)$ there exists $n^*\in\mathbb{N}$ such that
$$D_{\alpha}=\Big\{0, \frac{\log2}{-\log \alpha}\Big\}\cup\Big\{\frac{\log 2}{ \log\alpha} \sum_{i=1}^{n}\Big(\frac{-1}{2}\Big)^{i}:1\leq n\leq n^*\Big\}.$$
\item
$$D_{\alpha_{KL}}=\Big\{0, \frac{\log2}{-\log \alpha_{KL}},\frac{\log 2}{-3\log \alpha_{KL}}\Big\}\cup\Big\{\frac{\log 2}{ \log\alpha_{KL}} \sum_{i=1}^{n}\Big(\frac{-1}{2}\Big)^{i}:1\leq n< \infty \Big\}.$$
\item $D_{\alpha}$ contains an interval if $\alpha\in(1/3,\alpha_{KL}).$
\end{enumerate}
\end{theorem}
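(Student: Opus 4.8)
The plan is to exploit the simplified description \eqref{eq:11}: for $t\in\u_\alpha$ with unique $\alpha$-expansion $(t_i)$, the set $\G\cap(\G+t)$ is governed entirely by the sequence of \emph{admissible digit-pairs}, i.e. by whether $t_i\in\{-1,1\}$ (forcing a single choice $\epsilon_i$) or $t_i=0$ (leaving a free binary choice). So $\dim_H(\G\cap(\G+t))$ should depend only on the asymptotic frequency of zeros in $(t_i)$; more precisely, standard dimension formulas for such ``restricted digit'' subsets of a self-similar Cantor set give $\dim_H(\G\cap(\G+t))=\frac{\log 2}{-\log\alpha}\cdot \underline{\mathrm{freq}}_0(t)$ when the frequency of zeros exists (and one must be a little careful in the general case, using a variational/pressure argument or a covering argument à la Billingsley). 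The first task, therefore, is to record this dimension formula, reducing the computation of $D_\alpha$ to understanding which zero-frequencies are realised by sequences $(t_i)$ that are the unique $\alpha$-expansion of some $t$.

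The second task is to invoke the lexicographic characterisation of $\u_\alpha$ (the paper advertises this in the abstract; I would assume its statement is available by the time part (3) is proved). This characterisation says, roughly, that $(t_i)\in\{-1,0,1\}^{\mathbb N}$ is the unique expansion of its value iff every tail of $(t_i)$ and of its reflection $(-t_i)$ is lexicographically dominated by the quasi-greedy expansion of the relevant endpoint — call the governing sequence $\delta(\alpha)=(\delta_i(\alpha))$, the (quasi-greedy) expansion of $1$ in base $1/\alpha$ suitably interpreted for the three-letter alphabet. The key structural point is that as $\alpha$ decreases through $(1/3,1/2)$, the constraint sequence $\delta(\alpha)$ relaxes, and at $\alpha=\alpha_{KL}$ it becomes a specific (transcendental, eventually aperiodic) sequence analogous to the Komornik--Loreti constant in the classical unique-expansion theory; for $\alpha<\alpha_{KL}$ the admissible sequences $(t_i)$ form a subshift of \emph{positive entropy}. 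This is the crux of the whole theorem and the natural dividing line, and $\alpha_{KL}$ is exactly the base below which the ``univoque subshift'' on the symmetric alphabet jumps from zero to positive topological entropy.

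The third and decisive step is to convert positive entropy of the univoque subshift into the claim that $D_\alpha$ contains an interval. Concretely: for $\alpha\in(1/3,\alpha_{KL})$ the subshift $X_\alpha\subseteq\{-1,0,1\}^{\mathbb N}$ of admissible unique-expansion sequences has $h_{\mathrm{top}}(X_\alpha)>0$. I would then show that the set of zero-frequencies $\{\underline{\mathrm{freq}}_0(x):x\in X_\alpha\}$ contains a nondegenerate interval $[p_0,p_1]$. One clean way: exhibit two allowed finite words $w$ and $w'$ — one zero-rich, one zero-poor — such that every concatenation $w^{a_1}(w')^{b_1}w^{a_2}(w')^{b_2}\cdots$ stays in $X_\alpha$ (this uses that the lexicographic constraint is satisfied by any interleaving of sufficiently ``safe'' blocks, which in turn uses $\alpha<\alpha_{KL}$ so that $\delta(\alpha)$ is not too restrictive); by choosing the exponents we can make the zero-frequency hit any prescribed value strictly between $\mathrm{freq}_0(w)$ and $\mathrm{freq}_0(w')$. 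Feeding the resulting interval of frequencies through the dimension formula from Step 1 yields an interval inside $D_\alpha$. The main obstacle I anticipate is precisely the construction of such universally-concatenable blocks $w,w'$ with genuinely different zero-densities while respecting \emph{both} the forward lexicographic constraint and the reflected one simultaneously; getting blocks that are ``two-sidedly safe'' may require padding each block with a fixed buffer word, and one must check that the buffer does not collapse the frequency gap. A secondary technical point is handling the sup/liminf in the dimension formula when the frequency of zeros does not converge — but since we will be building sequences with prescribed convergent frequency this issue does not actually bite for the lower bound that produces the interval.
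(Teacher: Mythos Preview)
Your framework is right and, for part~(3), your approach coincides with the paper's: reduce via the dimension formula $\dim_H(\G\cap(\G+t))=\frac{\log 2}{-\log\alpha}\,\underline{d}((t_i))$ (this is Lemma~\ref{lem:23}), then for $\alpha<\alpha_{KL}$ exhibit two freely concatenable blocks in the univoque subshift with different zero-densities and interpolate. The paper does exactly this, importing from \cite{Kong_Li_Dekking_2010} a subshift of finite type inside $\U_\alpha$ on an alphabet $\{\zeta_n,\eta_n,\overline{\zeta_n},\overline{\eta_n}\}$ built from the Thue--Morse sequence, and then freely concatenating $\omega_1=\zeta_n\overline{\zeta_n}$ and $\omega_2=\zeta_n\eta_n\overline{\zeta_n}$, which have distinct zero-densities. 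Your anticipated obstacle (two-sided safety of the blocks) is handled precisely by this imported SFT structure rather than by ad hoc padding.

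The genuine gap is for parts~(1) and~(2). Knowing that the univoque subshift has zero entropy for $\alpha\ge\alpha_{KL}$ is far from enough to pin down $D_\alpha$ as the \emph{specific} finite or countable set in the statement. Two substantial ingredients are missing from your plan. First, one needs an exact structural description of $\U_\alpha$ for $\alpha\in[\alpha_{KL},1/2)$: the paper quotes from \cite{Kong_Li_Dekking_2010} that every element of $\U_\alpha\setminus\{(\pm1)^\infty\}$ eventually agrees with one of finitely many periodic sequences $(0)^\infty,(w_1\overline{w_1})^\infty,\ldots,(w_{n^*}\overline{w_{n^*}})^\infty$ when $\alpha>\alpha_{KL}$, and a more delicate countable description at $\alpha=\alpha_{KL}$ (Lemmas~\ref{lem:33} and~\ref{lem:34}). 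Here $w_n=\lambda_1\cdots\lambda_{2^n}$ with $(\lambda_i)=(\tau_i-\tau_{i-1})$ built from the Thue--Morse sequence. Second, one needs the explicit computation $d(w_n)=-\sum_{i=1}^{n}(-1/2)^i$ (Lemma~\ref{lem:31}), which is what produces the exact values $\frac{\log 2}{\log\alpha}\sum_{i=1}^n(-1/2)^i$, and the limit $d((\lambda_i))=1/3$ (Lemma~\ref{lem:32}), which accounts for the extra point $\frac{\log 2}{-3\log\alpha_{KL}}$ in part~(2). None of this follows from general entropy considerations; it is a hands-on combinatorial analysis of the Thue--Morse substitution structure, and without it you cannot recover the precise form of $D_\alpha$ claimed in~(1) and~(2).
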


In \cite{Li_Xiao_1998} it was asked ``When $\alpha\in(1/3,1/2)$ what are the possible values of $\dim_H(\G\cap(\G+t))$ for $t\in \G - \G$?"  The following theorem provides a partial solution to this problem.
\begin{theorem}
\label{th:12}
\begin{enumerate}
\item If $\alpha\in(1/3,\frac {3-\sqrt{5}}{2}]$ then $D_{\alpha}=[0,\frac{\log 2}{-\log \alpha}].$
\item If $\alpha\in (\frac{3-\sqrt{5}}{2}, 1/2)$ then $D_{\alpha}$ is a proper subset of $[0,\frac{\log 2}{-\log \alpha}].$
\end{enumerate}

\end{theorem}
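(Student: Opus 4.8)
The plan is to convert everything into a statement about digit frequencies. If $t\in\u_\alpha$ has unique $\alpha$-expansion $(t_i)$, then by \eqref{eq:11} we have $\G\cap(\G+t)=c_t+\big\{\sum_{i\in Z}\epsilon_i\alpha^i:\epsilon_i\in\{0,1\}\big\}$ with $Z=\{i:t_i=0\}$ and $c_t=\sum_{i:t_i=1}\alpha^i$; this is a homogeneous Moran subset of $[0,\frac{\alpha}{1-\alpha}]$ whose level-$N$ covering uses $2^{\#(Z\cap[1,N])}$ intervals of length $\alpha^N$, and (since $\alpha<1/2$ supplies the separation) the classical dimension formula gives
\[
\dim_H\big(\G\cap(\G+t)\big)=\frac{\log 2}{-\log\alpha}\,\underline{f}\big((t_i)\big),\qquad \underline{f}\big((t_i)\big):=\liminf_{N\to\infty}\frac{\#\{i\le N:t_i=0\}}{N}.
\]
Hence $D_\alpha=\frac{\log2}{-\log\alpha}\cdot F_\alpha$ with $F_\alpha:=\{\underline{f}((t_i)):(t_i)\text{ is a unique }\alpha\text{-expansion}\}$, and Theorem~\ref{th:12} becomes the assertion that $F_\alpha=[0,1]$ for $\alpha\in(1/3,\frac{3-\sqrt5}{2}]$ while $F_\alpha\subsetneq[0,1]$ for $\alpha\in(\frac{3-\sqrt5}{2},1/2)$. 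Note that $0\in F_\alpha$ always (take $(t_i)=1^\infty$, the endpoint of $\G-\G$) and $1\in F_\alpha$ always (take $(t_i)=0^\infty$), so only the intermediate frequencies are at stake.

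The engine is the lexicographic characterisation of $\u_\alpha$, which I would record dynamically: put $x_0=t$ and $x_n=x_{n-1}/\alpha-t_n$; then $(t_i)$ is a unique $\alpha$-expansion iff the orbit $(x_n)_{n\ge0}$ never enters the hole $H=[\beta,\gamma]\cup[-\gamma,-\beta]$, where $\beta=\frac{\alpha(1-2\alpha)}{1-\alpha}<\gamma=\frac{\alpha^2}{1-\alpha}$ for $\alpha>1/3$, and $t_n$ equals $0$, $1$ or $-1$ according as $x_{n-1}$ lies in $(-\beta,\beta)$, in $(\gamma,\frac{\alpha}{1-\alpha}]$, or in $[-\frac{\alpha}{1-\alpha},-\gamma)$. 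The digit $0$ occurs at position $n$ exactly when $x_{n-1}\in(-\beta,\beta)$, where the map is $x\mapsto x/\alpha$, expanding away from $0$; so a run of $m$ consecutive zeros forces $|x|<\alpha^{m-1}\beta$ at its start, and the value reached right after such a run followed by a $1$ lies in the interval $\big(\tfrac{2\alpha-1}{1-\alpha},\ \tfrac{\alpha^2-3\alpha+1}{\alpha(1-\alpha)}\big)$. The right endpoint $\tfrac{\alpha^2-3\alpha+1}{\alpha(1-\alpha)}$ is positive iff $\alpha^2-3\alpha+1>0$, i.e. iff $\alpha<\frac{3-\sqrt5}{2}$ — this sign change is the source of the threshold, and the same inequality reappears as $\alpha^2\lessgtr\beta$ in the backward bookkeeping below.

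For Theorem~\ref{th:12}(1) I would prove $F_\alpha=[0,1]$ constructively. When $\alpha<\frac{3-\sqrt5}{2}$, the computation above shows the value after a zero-run-then-$1$ can be pushed arbitrarily close to $0$, so one can thread zero-runs $0^{m_j}$ of prescribed (admissible) lengths between short "excursion" blocks — a single $1$, or a $1^k(-1)^{k'}$-oscillation for the low-frequency regime, which is admissible since $\alpha<\frac{3-\sqrt5}{2}<\sqrt2-1$; the needed freedom is that for each small starting value $v>0$ a suitable zero-run length exists (the intervals $(\alpha^a\gamma,\alpha^aR]$ tile $(0,R]$, $R=\frac{\alpha}{1-\alpha}$), so the orbit can always re-enter a nonzero digit after enough zeros. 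Calibrating the $m_j$ — after each long zero-run, appending exactly enough nonzero digits to bring the running proportion of $0$'s back down to the target $p$ — produces a unique expansion with $\underline{f}=p$ for every $p\in[0,1]$. At the endpoint $\alpha=\frac{3-\sqrt5}{2}$ the word $(1\,0^L)^\infty$ degenerates, but a short check shows $(1(-1)0^L)^\infty$ remains a unique expansion for all large $L$ (hole-avoidance reduces to the identity $(1-\alpha)^2=\alpha$ together with strict inequalities), and a pattern of the form $0^{m}1\,0^{m'}(-1)\,1\,0^{m''}\cdots$ still supplies arbitrarily long zero-runs, so the same calibration goes through. The technical burden of part (1) is verifying hole-avoidance for the concatenated words, i.e. checking that the construction is robust to the exact gap lengths one is forced to use.

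For Theorem~\ref{th:12}(2), suppose $\alpha>\frac{3-\sqrt5}{2}$. Now $\tfrac{\alpha^2-3\alpha+1}{\alpha(1-\alpha)}<0$, so immediately after any zero-run followed by a nonzero digit the orbit is at distance $\ge\kappa(\alpha):=\tfrac{3\alpha-1-\alpha^2}{\alpha(1-\alpha)}>0$ from $0$. Tracing the orbit backwards from a zero-run of length $m$: the preceding nonzero digit $t_n$ has $x_{n-1}$ within $\alpha^m\beta$ of $\alpha t_n$; the alternatives $t_{n-1}\in\{0,-1\}$ would force $x_{n-2}$ to lie within $O(\alpha^{m})$ of, respectively, $\alpha^2$ or $-\alpha(1-\alpha)$, each impossible once $\alpha>\frac{3-\sqrt5}{2}$ (these are exactly the inequalities $\alpha^2>\beta$ and $\alpha(1-\alpha)\ge\gamma$, both false), and the estimates only tighten as one goes further back; hence for $m$ larger than an absolute constant $M^*(\alpha)$ all of $t_1,\dots,t_n$ equal a single sign. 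Consequently a unique expansion can contain at most one zero-run of length $>M^*$ (two such, at positions $<n_1+1$ and $\le n_2$ with a nonzero digit in between, would force $t_{n_1+1}=0$ to equal that sign, a contradiction), and after such a run the orbit stays bounded away from $0$, so all later zero-runs have length $\le M^*$. Therefore every unique expansion is either eventually $0^\infty$ (with $\underline{f}=1$) or has all its zero-runs of length $\le M^*$, whence $\underline{f}\le\frac{M^*}{M^*+1}<1$; thus $F_\alpha\subseteq[0,\frac{M^*}{M^*+1}]\cup\{1\}$ misses $(\frac{M^*}{M^*+1},1)$ and $D_\alpha$ is a proper subset of $[0,\frac{\log2}{-\log\alpha}]$. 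The delicate point throughout is making the "backward propagation" estimates precise, uniformly in the position $n$; together with the concatenation estimates of part (1), this is where essentially all of the work lies.
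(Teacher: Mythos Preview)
Your reduction to digit frequencies (this is the paper's Lemma~\ref{lem:23}) and your two-part strategy coincide with the paper's: for (2), show that long zero-runs are eventually forbidden, hence $\underline f$ is bounded away from $1$; for (1), exhibit sequences in $\U_\alpha$ realising every zero-frequency in $[0,1]$. The paper, however, stays entirely in the lexicographic picture (Lemmas~\ref{lem:21}--\ref{lem:22}) rather than in your orbit/hole-avoidance picture, and this collapses each half to a few lines. For (2): once $\alpha>\frac{3-\sqrt5}{2}$ the quasi-greedy sequence $(\delta_i(\alpha))$ begins $1\,0^k(-1)$, so any tail beginning $1\,0^{k+1}$ violates Lemma~\ref{lem:21} directly; this is your backward-propagation argument rewritten as a single lexicographic inequality. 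For (1): the paper simply observes that every tail of $(1(-1))^{n_1}0^{n_2}(1(-1))^{n_3}0^{n_4}\cdots$ lies strictly between $(-1)0^\infty$ and $10^\infty=(\delta_i(\frac{3-\sqrt5}{2}))$, so the sequence is in $\U_{(3-\sqrt5)/2}\subseteq\U_\alpha$ for every choice of $(n_i)$.

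Your part~(1) has a genuine gap, not just a technical burden. The excursion blocks you propose for low frequencies, $1^k(-1)^{k'}$, are \emph{not} admissible for $\alpha$ near $\frac{3-\sqrt5}{2}$: for every $\alpha\in(\frac{\sqrt3-1}{2},\frac{3-\sqrt5}{2}]$ one has $\delta_1(\alpha)=1,\ \delta_2(\alpha)=0$, so any tail beginning $11$ is $\succ(\delta_i(\alpha))$ and hence forbidden by Lemma~\ref{lem:21} (the inequality $\alpha<\sqrt2-1$ you cite is the wrong threshold). By reflection $(-1)(-1)$ is equally forbidden, so the only admissible nonzero runs there are the alternating ones $(1(-1))^j$. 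Likewise your ``single $1$'' blocks give transitions $1\,0^m\,1$, which fail the same test unless $m$ is large, so that construction only reaches frequencies close to $1$. The construction that works uniformly on $(1/3,\frac{3-\sqrt5}{2}]$ is the alternating one you invoke only at the endpoint, $(1(-1))^{n_1}0^{n_2}\cdots$, and this is exactly the paper's Lemma~\ref{lem:42}.
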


Amongst $\G-\G$ a special class of $t$ are those for which $\G\cap (\G+t)$ is a self-similar set. Determining whether  $\G\cap (\G+t)$ is a self-similar set is a difficult problem for a generic $t$ with many $\alpha$-expansions, thus we consider only those $t\in \u_{\alpha}$. Let
$$S_{\alpha}:=\Big\{t\in \u_{\alpha}: \G\cap(\G+t) \textrm{ is a self-similar set} \Big\}.$$

We prove the following result.

\begin{theorem}
\label{th:13}
\begin{enumerate}
\item If $\alpha\in(1/3,\frac{3-\sqrt{5}}{2}]$ then $\{\dim(\G\cap(\G+t) ): t\in S_\alpha\}$ is dense in $[0,\frac{\log 2}{-\log \alpha}].$
\item If $\alpha\in(\frac{3-\sqrt{5}}{2},1/2)$ then $\{\dim(\G\cap(\G+t) ): t\in S_\alpha\}$ is not dense in $[0,\frac{\log 2}{-\log \alpha}].$
\end{enumerate}
\end{theorem}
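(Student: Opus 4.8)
The plan is to leverage Theorem~\ref{th:12} together with the structure theory of unique expansions to control the set $\{\dim(\G\cap(\G+t)) : t\in S_\alpha\}$. For part (1), where $\alpha\in(1/3,\frac{3-\sqrt5}{2}]$, I would first recall from the proof of Theorem~\ref{th:12}(1) that $D_\alpha=[0,\frac{\log 2}{-\log\alpha}]$, meaning every value in this interval is realised as $\dim_H(\G\cap(\G+t))$ for some $t\in\u_\alpha$. The key observation is that whenever $t$ has a unique $\alpha$-expansion $(t_i)$ that is \emph{eventually periodic} (or more generally has a self-similar structure), the description \eqref{eq:11} shows that $\G\cap(\G+t)$ is a graph-directed or self-similar set; in particular, if $(t_i)$ is periodic then $\G\cap(\G+t)$ is genuinely self-similar, so $t\in S_\alpha$. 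Thus the strategy is: given any target dimension $d\in[0,\frac{\log 2}{-\log\alpha}]$ and any $\ep>0$, take $t\in\u_\alpha$ with $\dim_H(\G\cap(\G+t))$ within $\ep/2$ of $d$, then approximate the expansion $(t_i)$ by a periodic sequence $(t_i')$ (truncate at a large level and repeat), check via the lexicographic characterisation of $\u_\alpha$ that $(t_i')$ still lies in $\u_\alpha$, and verify that the dimension of the associated self-similar set $\G\cap(\G+t')$ — computable as $\frac{\log\rho}{-\log\alpha^p}$ where $\rho$ is the spectral radius of the relevant transition matrix over one period — converges to $\dim_H(\G\cap(\G+t))$ as the period grows. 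This yields density.

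The main obstacle in part (1) is the approximation step: one must ensure that truncating-and-repeating a unique expansion $(t_i)$ produces another unique expansion, and that the dimension varies continuously under this operation. The lexicographic characterisation of $\u_\alpha$ promised in the abstract should say that $(t_i)\in\u_\alpha$ iff the shifted sequences satisfy certain inequalities against a ``quasi-greedy'' bound; periodic approximants of a sequence satisfying strict versions of these inequalities will still satisfy them, but one has to handle the boundary case where the inequalities are not strict (here a small perturbation of the period, or a careful choice of where to cut, should suffice). For the dimension continuity, I would use that the number of length-$n$ words appearing in $\{0,1\}\cap(\{0,1\}+t_i)$ patterns grows at a rate governed by the digit frequencies / subshift entropy, and that this entropy is continuous with respect to the natural topology on the space of admissible $(t_i)$; alternatively one appeals directly to a known continuity result for dimensions of graph-directed constructions.

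For part (2), where $\alpha\in(\frac{3-\sqrt5}{2},1/2)$, the approach is a direct consequence of Theorem~\ref{th:12}(2): since $\{\dim(\G\cap(\G+t)):t\in S_\alpha\}\subseteq\{\dim(\G\cap(\G+t)):t\in\u_\alpha\}=D_\alpha$, and $D_\alpha$ is a proper subset of $[0,\frac{\log 2}{-\log\alpha}]$, it suffices to show $D_\alpha$ is not dense — or rather, that its closure is not all of $[0,\frac{\log 2}{-\log\alpha}]$. The cleanest route is to identify, from the proof of Theorem~\ref{th:11}, the explicit gap in $D_\alpha$: for $\alpha$ close to $1/2$ the set $D_\alpha$ is a finite set of isolated points $\{0,\frac{\log 2}{-\log\alpha}\}\cup\{\frac{\log 2}{\log\alpha}\sum_{i=1}^n(-1/2)^i\}$, which is certainly not dense, and for $\alpha\in(\frac{3-\sqrt5}{2},\alpha_{KL})$ one exhibits a concrete subinterval of $[0,\frac{\log 2}{-\log\alpha}]$ disjoint from $D_\alpha$ (such a gap must exist since $D_\alpha\subsetneq[0,\frac{\log 2}{-\log\alpha}]$ is closed, being a continuous image of a compact set, or can be shown closed directly). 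Then $S_\alpha\subseteq\u_\alpha$ forces $\{\dim(\G\cap(\G+t)):t\in S_\alpha\}$ to miss that same subinterval, so it is not dense. The only subtlety here is confirming that $D_\alpha$ (or at least $\overline{D_\alpha}$) genuinely omits an interval rather than merely omitting a single point — but this is exactly the content made available by the finiteness/interval dichotomy in Theorem~\ref{th:11}, so part (2) should be essentially immediate once part (1)'s machinery and Theorems~\ref{th:11}--\ref{th:12} are in hand.
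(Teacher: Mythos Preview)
Your strategy for part (1) is workable in principle but takes a significant detour around the tools already available, and one step is unsafe as written. The paper does not approximate an arbitrary $(t_i)\in\U_\alpha$ by a periodic one; it simply reuses the explicit family from Lemma~\ref{lem:42}: for any $n_1,\ldots,n_j\in\mathbb N$ the \emph{periodic} sequence
\[
(t_i)=\bigl((1(-1))^{n_1}\,0^{n_2}\cdots(1(-1))^{n_{j-1}}\,0^{n_j}\bigr)^\infty
\]
already lies in $\U_\alpha$, so $(1-|t_i|)$ is (strongly) eventually periodic and $t\in S_\alpha$ by Proposition~\ref{prop:43}. Lemma~\ref{lem:23} then gives $\dim_H(\G\cap(\G+t))=\frac{\log 2}{-\log\alpha}\,d((t_i))$, and an appropriate choice of the $n_k$ makes $d((t_i))$ hit any rational in $[0,1]$. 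No approximation, no spectral radius, no continuity argument. By contrast, your ``truncate and repeat'' step can fail: for $\alpha=\frac{3-\sqrt5}{2}$ the sequence $0\,1\,(-1)\,1\,(-1)\cdots$ lies in $\U_\alpha$, but truncating after two digits gives $(01)^\infty$, whose tail $1010\cdots$ satisfies $1010\cdots\succ 1(0)^\infty=(\delta_i(\alpha))$, so $(01)^\infty\notin\U_\alpha$. You would have to build in a nontrivial rule for where to cut, at which point you are essentially reconstructing the explicit family above.

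For part (2) there is a genuine gap. Your claim that $D_\alpha$ is closed as ``a continuous image of a compact set'' does not hold: the map $(t_i)\mapsto\underline d((t_i))$ is a $\liminf$ and is \emph{not} continuous on $\{-1,0,1\}^{\mathbb N}$, and in this range $\U_\alpha$ is not closed either. So $D_\alpha\subsetneq[0,\frac{\log 2}{-\log\alpha}]$ alone does not yield non-density, and Theorem~\ref{th:11} says nothing about $\alpha\in(\frac{3-\sqrt5}{2},\alpha_{KL})$. The paper instead extracts an explicit open gap directly from the proof of Theorem~\ref{th:12} (i.e.\ Lemma~\ref{lem:41}): there is an $N$ such that no $(t_i)\in\U_\alpha$ contains $1(0)^N$ or $(-1)(0)^N$ infinitely often, which forces $\underline d((t_i))\notin(1-\ep,1)$ for some fixed $\ep>0$ and every $(t_i)\in\U_\alpha$. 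Via Lemma~\ref{lem:23} this gives a whole subinterval of $[0,\frac{\log 2}{-\log\alpha}]$ missed by $D_\alpha$, uniformly for all $\alpha\in(\frac{3-\sqrt5}{2},1/2)$, without any case split at $\alpha_{KL}$ or any appeal to closedness.
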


What remains of this paper is arranged as follows. In Section $2$ we recall the necessary preliminaries from expansions in non-integer bases, and recall an important result of  \cite{Li_Xiao_1998}  that connects the dimension of $\G\cap (\G+t)$ with the frequency of $0$'s in the $\alpha$-expansion $(t_i).$ In Section $3$ we prove Theorem \ref{th:11}, and in Section $4$ we prove Theorem \ref{th:12} and Theorem \ref{th:13}. In Section $5$ we include some examples. We give two examples of an $\alpha\in(1/3,1/2),$ and $t\in \G-\G$ with a continuum of $\alpha$-expansions, for which we can explicitly calculate $\dim_{H}(\G\cap (\G+t))$. The techniques used in our first example can be applied to the more general case where $\alpha$ is the reciprocal of a Pisot number and $t\in \mathbb{Q}(\alpha)$. Our second example demonstrates that it is possible for $t$ to have a continuum of $\alpha$-expansions and for $\G\cap(\G+t)$ to be a self-similar set. Moreover, both of these examples show that it is possible to have $$\dim_{H}(\G+(\G+t))>\sup_{\tilde{t}}\dim_{H}\Big(\Big\{\sum_{i=1}^{\infty}\epsilon_i\alpha^i:\epsilon_i\in \{0,1\}\cap (\{0,1\}+\tilde{t}_i)\Big\}\Big).$$

Our final example demonstrates the existence of $\alpha\in(1/3,1/2)$ and $t\in \G-\G$ for which $\G\cap (\G+t)$ contains only transcendental numbers.

\section{Preliminaries}
Let $M\in \mathbb{N}$ and $\alpha\in [\frac{1}{M+1},1).$ Given $x\in I_{\alpha,M}:=[0,\frac{M\alpha}{1-\alpha}]$ we call a sequence $(\epsilon_{i})\in \{0,\ldots, M\}^{\mathbb{N}}$ an \emph{$\alpha$-expansion} for $x$ with alphabet $\set{0, \cdots,M}$ if $$x=\sum_{i=1}^{\infty}\epsilon_i\alpha^i.$$ This method of representing real numbers was pioneered in the early $1960$'s in the papers of R\'{e}nyi \cite{Renyi_1957} and Parry \cite{Parry_1960}. One aspect of these representations that makes them interesting is that for $\alpha\in(\frac{1}{M+1},1)$ a generic $x\in I_{\alpha,M}$ has many $\alpha$-expansions (cf.~\cite{Dajani_DeVries_2007, Sidorov_2003, Sidorov_2007}).
This naturally leads researchers to study the set of $x\in I_{\alpha,M}$ with a unique $\alpha$-expansion, the so called \emph{univoque set}. We define this set as follows
 $$\u_{\alpha,M}:=\Big\{x\in I_{\alpha,M}: x \textrm{ has a unique }\alpha\textrm{-expansion w.r.t. the aphabet}~\set{0,1,\cdots,M}\Big\}.$$
Accordingly, let $\U_\alpha$ denote the set of corresponding expansions, i.e.,
 $$\U_{\alpha,M}:=\Big\{(\epsilon_i)\in\{0,\ldots,M\}^{\mathbb{N}}: \sum_{i=1}^{\infty}\epsilon_i\alpha^i\in \u_{\alpha,M}\Big\}.$$The sets $\u_{\alpha,M}$ and $\U_{\alpha,M}$ have been studied by many authors. For more information on these sets we refer the reader to \cite{Erdos_Joo_Komornik_1990, Darczy_Katai_1995, Glendinning_Sidorov_2001, DeVries_Komornik_2008, Komornik_2011, Komornik_Kong_Li_2015_1} and the references therein. Before continuing with our discussion of the sets $\u_{\alpha,M}$ and $\U_{\alpha,M}$ we make a brief remark. In the introduction we were concerned with $\alpha$-expansions with digit set $\{-1,0,1\}$, not with a digit set $\{0,\ldots,M\}$. However, all of the result that are stated below for a digit set $\{0,\ldots,M\}$ also hold for any digit set of $M+1$ consecutive integers $\{s,\ldots,s+M\}.$ In particular, statements that are true for the digit set $\{0,1,2\}$ translate to results for the digit set $\{-1,0,1\}$ by performing the substitutions $0\to -1$, $1\to 0$, $2\to 1$.

We now define the lexicographic order   and introduce some notations. Given two finite sequences $\omega=(\omega_1,\ldots,\omega_n),\,\omega'=(\omega_1',\ldots,\omega_n')\in\{0,\ldots,M\}^n,$ we say that $\omega$ is less than $\omega'$ with respect to the lexicographic order, or simply write $\omega\prec \omega'$, if $\omega_1<\omega_1'$ or if there exists $1\leq j<n$ such that $\omega_i=\omega_i'$ for $1\leq i\leq j$ and $\omega_{j+1}<\omega_{j+1}'$. One can also define the relations $\preceq, \succ, \succeq$ in the natural way, and we can extend the lexicographic order   to infinite sequences. We define the \emph{reflection} of a finite/infinite sequence $(\epsilon_i)$ to be $(\overline{\epsilon_i})=(M-\epsilon_i),$ where the underlying $M$ should be obvious from our context. For a finite sequence $\omega=(\omega_1,\ldots,\omega_n)$ we define the finite sequence $\omega^{-}$ to be $(\omega_1,\ldots, \omega_n -1).$ Moreover, we denote the concatenation of $\omega$ with itself $n$ times by $\omega^n$, we also let $\omega^{\infty}$ denote the infinite sequence obtained by indefinitely concatentating $\omega$ with itself.

Given $x\in I_{\alpha,M}$ we define the \emph{greedy} $\alpha$-expansion of $x$ to be the lexicographically largest sequence amongst the $\alpha$-expansions of $x$. We define the \emph{quasi-greedy} $\alpha$-expansion of $x$ to be the lexicographically largest infinite sequence amongst the $\alpha$-expansions of $x$. Here we call a sequence $(\epsilon_i)$ \emph{infinite} if $\epsilon_i\neq 0$ for infinitely many $i$. When studying the sets $\u_{\alpha,M}$ and $\U_{\alpha,M}$ a pivotal role is played by the quasi-greedy $\alpha$-expansion of $1$. In what follows we will denote the quasi-greedy $\alpha$-expansion of $1$ by $(\delta_i(\alpha))$. The importance of the sequence $(\delta_i(\alpha))$ is well demonstrated by the following technical lemma proved in \cite{Parry_1960} (see also, \cite{Erdos_Joo_Komornik_1990, DeVries_Komornik_2008}).

\begin{lemma}
\label{lem:21}
A sequence $(\epsilon_i)$ belongs to $\U_{\alpha,M}$ if and only if the following
two conditions are satisfied:
\begin{align*}
&(\epsilon_{n+i}) \prec (\delta_i(\alpha)) \textrm{ whenever } \epsilon_1\ldots\epsilon_n \neq M^n\\
&(\overline{\epsilon_{n+i}}) \prec (\delta_i(\alpha)) \textrm{ whenever } \epsilon_1\ldots\epsilon_n \neq 0^n
\end{align*}
\end{lemma}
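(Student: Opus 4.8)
The plan is to prove Lemma~\ref{lem:21} by unwinding the definition of a unique $\alpha$-expansion into a lexicographic obstruction, exactly as in Parry's original argument. First I would recall that among all $\alpha$-expansions of a given $x$ there is a lexicographically largest one, the greedy expansion, and a lexicographically smallest one (obtained by reflecting the greedy expansion of $\frac{M\alpha}{1-\alpha}-x$); thus $x\in\u_{\alpha,M}$ precisely when its greedy and lazy expansions coincide, i.e. when $(\epsilon_i)$ is simultaneously greedy and lazy. So the task reduces to characterising greedy sequences and lazy sequences separately in terms of $(\delta_i(\alpha))$, and then intersecting the two conditions.

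For the greedy direction I would use the standard fact (again essentially due to Parry, and recorded in \cite{Parry_1960, Erdos_Joo_Komornik_1990, DeVries_Komornik_2008}) that a sequence $(\epsilon_i)\in\{0,\ldots,M\}^{\mathbb N}$ is the greedy expansion of some $x\in I_{\alpha,M}$ if and only if $(\epsilon_{n+i})\prec(\delta_i(\alpha))$ whenever $\epsilon_{n+1}<M$ — equivalently whenever $\epsilon_1\ldots\epsilon_n\neq M^n$ one must have the tail strictly below $(\delta_i(\alpha))$. The idea behind this is the usual one: if some tail were $\succeq(\delta_i(\alpha))$ at a position where the digit is not maximal, one could increase an earlier digit and compensate, producing a lexicographically larger expansion of the same number, contradicting greediness; conversely the lexicographic condition forces the greedy algorithm to reproduce exactly $(\epsilon_i)$. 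By the reflection symmetry $0\leftrightarrow M$, the lazy (lexicographically smallest) expansions are characterised by the mirror image: $(\overline{\epsilon_{n+i}})\prec(\delta_i(\alpha))$ whenever $\epsilon_1\ldots\epsilon_n\neq 0^n$. Intersecting the greedy and lazy characterisations gives precisely the two displayed conditions of the lemma, and conversely any $(\epsilon_i)$ satisfying both is at once greedy and lazy, hence unique.

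The one point that needs genuine care — and which I expect to be the main obstacle to write cleanly — is the handling of the infinite/finite (i.e. $\delta$ being \emph{quasi}-greedy rather than greedy) subtlety: one must be sure that using the quasi-greedy expansion $(\delta_i(\alpha))$ of $1$, rather than the greedy one, is exactly what makes the strict inequalities $\prec$ (as opposed to $\preceq$) correct, including in the degenerate case where the greedy expansion of $1$ is finite. This is where one invokes that $(\delta_i(\alpha))$ is the largest \emph{infinite} expansion of $1$, so that $(\delta_i(\alpha))\preceq$ any tail would already force the tail to be, eventually, a legitimate alternative expansion and hence destroy uniqueness; conversely strictness of $\prec$ throughout guarantees no competing expansion can arise. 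Since this is a known result I would not reprove it from scratch but would cite \cite{Parry_1960, Erdos_Joo_Komornik_1990, DeVries_Komornik_2008} for the precise statement and merely indicate the greedy/lazy intersection argument above; in the present paper it is used only as a tool, via the $0\to-1,\ 1\to 0,\ 2\to1$ substitution, to describe $\U_{\alpha,2}$ and hence $\U_\alpha$.
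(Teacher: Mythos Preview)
The paper does not give a proof of Lemma~\ref{lem:21}; it simply states the result and attributes it to \cite{Parry_1960} (with pointers to \cite{Erdos_Joo_Komornik_1990, DeVries_Komornik_2008}). Your proposal to cite these same sources while sketching the greedy/lazy intersection argument is therefore entirely in line with the paper's treatment, and the sketch you give is the standard correct one.

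One small slip worth fixing: you write that the greedy condition ``$(\epsilon_{n+i})\prec(\delta_i(\alpha))$ whenever $\epsilon_{n+1}<M$'' is \emph{equivalent} to ``whenever $\epsilon_1\ldots\epsilon_n\neq M^n$''. These are not literally the same hypothesis: the usual greedy characterisation imposes the tail bound at positions $n$ with $\epsilon_n<M$, whereas the lemma's formulation imposes it at every $n$ whose prefix is not $M^n$. For a sequence that is simultaneously greedy and lazy the two formulations do coincide (once a non-$M$ digit has appeared, the combined conditions force every subsequent tail to lie strictly between $(\overline{\delta_i(\alpha)})$ and $(\delta_i(\alpha))$), but you should not present them as trivially interchangeable at the level of the greedy condition alone. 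This is a cosmetic point and does not affect the validity of your argument.
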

Lemma \ref{lem:21} provides a useful characterisation of the set $\U_{\alpha,M}$ in terms of the sequence $(\delta_i(\alpha))$. The following lemma describes the sequences $(\delta_i(\alpha))$.

\begin{lemma}
\label{lem:22}
Let $M\in\mathbb{N},$ $\alpha\in[\frac{1}{M+1},1)$ and $(\delta_i(\alpha))$ be the quasi-greedy $\alpha$-expansion of $1$. The map $\alpha \to (\delta_i(\alpha))$ is a strictly decreasing bijection from the interval $[\frac{1}{M+1},1)$ onto the set of all infinite sequences
$(\delta_i) \in\{0,\ldots, M\}^{\mathbb{N}}$ satisfying
$$\delta_{k+1}\delta_{k+2} \cdots \preceq\delta_1\delta_2\cdots \textrm{ for all }k \geq 0.$$
\end{lemma}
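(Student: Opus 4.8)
The plan is to prove the two directions separately: first that every quasi-greedy expansion $(\delta_i(\alpha))$ of $1$ satisfies the stated shift-inequality (and is infinite), then that every infinite sequence $(\delta_i)$ satisfying $\delta_{k+1}\delta_{k+2}\cdots \preceq \delta_1\delta_2\cdots$ for all $k\geq 0$ arises as the quasi-greedy expansion of $1$ for a unique $\alpha\in[\frac1{M+1},1)$; monotonicity then follows along the way.

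\emph{Necessity.} Suppose $(\delta_i)=(\delta_i(\alpha))$ is the quasi-greedy expansion of $1$, i.e. the lexicographically largest \emph{infinite} sequence with $\sum \delta_i\alpha^i=1$. Infiniteness is immediate from the definition. For the shift inequality, fix $k\geq 0$; one considers the tail value $x_k:=\sum_{i=1}^\infty \delta_{k+i}\alpha^i$. A standard computation shows $x_k\leq 1$: indeed $1=\sum_{i=1}^k\delta_i\alpha^i+\alpha^k x_k$, and since $\sum_{i=1}^k\delta_i\alpha^i\geq 0$ with the remaining geometric mass bounded appropriately, one gets $\alpha^k x_k\leq 1$, hence $x_k\le 1\le \alpha^{-k}$; more carefully $x_k\le 1$. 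Now $(\delta_{k+i})$ is \emph{an} $\alpha$-expansion of $x_k\le 1$, and I claim this forces $(\delta_{k+i})\preceq(\delta_i)$. If $x_k<1$ this is clear by comparing with the greedy algorithm (any expansion of a number $<1$ is lexicographically smaller than any infinite expansion of $1$, using that $\alpha\ge\frac1{M+1}$ so the map ``value $\mapsto$ greedy expansion'' is order-preserving). If $x_k=1$, then $(\delta_{k+i})$ is an infinite $\alpha$-expansion of $1$, hence $\preceq$ the \emph{largest} such, which is $(\delta_i)$ by definition of quasi-greedy. Either way $\delta_{k+1}\delta_{k+2}\cdots\preceq\delta_1\delta_2\cdots$. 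The one subtlety to handle carefully is the genuinely infinite case and the possibility of equality in intermediate digits; I would phrase the tail-value argument via the inequality $\sum_{i=1}^n \delta_{k+i}\alpha^i + \frac{M\alpha^{n+1}}{1-\alpha}\geq \alpha^k\cdot(\text{something})$ to rule out $(\delta_{k+i})\succ(\delta_i)$ directly, avoiding case splits.

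\emph{Sufficiency and bijectivity.} Conversely, given an infinite $(\delta_i)\in\{0,\dots,M\}^{\mathbb N}$ with $\delta_{k+1}\delta_{k+2}\cdots\preceq\delta_1\delta_2\cdots$ for all $k$, set $\alpha$ implicitly by $\sum_{i=1}^\infty \delta_i\alpha^i=1$: the left side is continuous and strictly increasing in $\alpha$ on $(0,1)$, equals $0$ at $\alpha=0$ and exceeds $1$ as $\alpha\uparrow1$ (as $(\delta_i)$ is infinite, so the sum $\to\sum\delta_i\ge 1$, in fact $\to\infty$ unless eventually zero — which it isn't), so there is a unique such $\alpha$, and one checks $\alpha\in[\frac1{M+1},1)$ using $\delta_1\le M$ together with the inequality $1=\sum\delta_i\alpha^i\le M\alpha/(1-\alpha)$. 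It remains to show $(\delta_i)$ is the quasi-greedy expansion of $1$ for this $\alpha$, i.e. that it is the largest infinite $\alpha$-expansion of $1$. Suppose $(\eta_i)$ is an infinite $\alpha$-expansion of $1$ with $(\eta_i)\succ(\delta_i)$; let $j$ be the first index where $\eta_j>\delta_j$. Then $\sum_{i\ge j+1}\eta_i\alpha^i = \sum_{i\ge j}\delta_i\alpha^i - (\eta_j-\delta_j)\alpha^j \le \sum_{i\ge j}\delta_i\alpha^i-\alpha^j$, so the tail $\sum_{i\ge1}\eta_{j+i}\alpha^i \le \sum_{i\ge1}\delta_{j-1+i}\alpha^i - 1 \le 0$ using the shift inequality $\sum_{i\ge1}\delta_{(j-1)+i}\alpha^i\le \sum_{i\ge1}\delta_i\alpha^i=1$; but the tail of an infinite nonnegative sequence is strictly positive, a contradiction. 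This proves $(\delta_i)$ is quasi-greedy. Injectivity of $\alpha\mapsto(\delta_i(\alpha))$ is clear since the expansion determines $1=\sum\delta_i(\alpha)\alpha^i$ hence $\alpha$; strict monotonicity follows from the sufficiency construction (two different $\alpha$ give different values of a fixed admissible sequence, and comparing two admissible sequences digit-by-digit against the equation $\sum\delta_i\alpha^i=1$ shows larger sequence forces smaller $\alpha$). Surjectivity onto the admissible set is exactly the sufficiency argument.

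\emph{Main obstacle.} The delicate point is the necessity direction: showing the tail sequences of a quasi-greedy expansion never lexicographically exceed the whole sequence, while correctly accounting for the distinction between greedy and quasi-greedy (the inequality is $\preceq$, not $\prec$, precisely because equality $x_k=1$ can and does occur, e.g. for eventually periodic $(\delta_i)$). I would isolate this as an internal claim: \emph{if $(a_i)$ is the largest infinite expansion of a value $x\in(0,\tfrac1{1-\alpha}]$ and $\sum_{i\ge1}a_{k+i}\alpha^i\le 1$ for some $k$, then $(a_{k+i})\preceq(a_i)$ when $x=1$}, proved by the direct value-comparison above rather than by invoking the greedy algorithm, so as to keep the argument self-contained. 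Everything else is routine monotonicity and continuity of geometric series. Since Lemma 2.4 is classical (Parry), I would keep the write-up brief and cite \cite{Parry_1960, DeVries_Komornik_2008} for the full details, presenting only the key inequalities above.
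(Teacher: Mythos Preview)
The paper does not prove Lemma~\ref{lem:22}; it is stated without proof as a classical result (the surrounding discussion and the citations attached to Lemma~\ref{lem:21} point to Parry~\cite{Parry_1960} and de~Vries--Komornik~\cite{DeVries_Komornik_2008}), so there is no in-paper argument to compare yours against. Your plan to give a short sketch and defer full details to those references is exactly in line with the paper's treatment.

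One point in your sketch deserves a flag. In both directions you rely on the inequality $x_k:=\sum_{i\ge1}\delta_{k+i}\alpha^i\le 1$: in the necessity direction you write ``more carefully $x_k\le1$'' after only having derived $\alpha^k x_k\le1$, and in the sufficiency direction you invoke ``the shift inequality $\sum_{i\ge1}\delta_{(j-1)+i}\alpha^i\le 1$'' as though a lexicographic bound on the tail automatically yields a bound on its $\alpha$-value. It does not in general --- lexicographic order and numerical order on $\{0,\dots,M\}^{\mathbb N}$ do not agree for $\alpha>\tfrac1{M+1}$. Establishing $x_k\le1$ from the shift condition requires a short extra argument (e.g.\ set $L=\sup_k x_k$, compare $x_k$ and $x_0=1$ at the first index of disagreement, and show $L>1$ forces $(1-\alpha)(L-1)\le0$). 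This is precisely the content of the Parry argument and is, as you correctly identify under ``Main obstacle'', where the real work sits. Since you intend to cite \cite{Parry_1960, DeVries_Komornik_2008} for the details this is not a problem for the paper, but do not present the inequality as a ``standard computation'' if you write it out.
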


The following technical result was proved in \cite[Theorem 3.4]{Li_Xiao_1998} for $\alpha\in(0,1/3],$ where importantly every $t$ has a unique $\alpha$-expansion, except for   $\alpha=1/3$ where countably many $t$ have two $\alpha$-expansions. The proof translates over to the more general case where $\alpha\in(1/3,1/2)$ and $t\in \u_{\alpha}.$

\begin{lemma}
\label{lem:23}
Let $\alpha\in(1/3,1/2)$ and $t\in \u_{\alpha},$ then $$\dim_{H}(\G\cap(\G+t))=\frac{\log2}{-\log \alpha}\underline{d}((t_i)),$$
 where $$\underline{d}((t_i)):=\liminf_{n\to \infty} \frac{\#\{1\leq i \leq n: t_i=0\}}{n}.$$
\end{lemma}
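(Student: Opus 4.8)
The plan is to establish the formula by reducing the dimension computation for $\G\cap(\G+t)$ to the dimension of a suitably defined symbolic set governed by the digit pattern of $(t_i)$. First I would invoke \eqref{eq:11}: since $t\in\u_\alpha$ has a unique $\alpha$-expansion $(t_i)$, the set $\G\cap(\G+t)$ coincides with $\{\sum_{i\geq 1}\epsilon_i\alpha^i:\epsilon_i\in\{0,1\}\cap(\{0,1\}+t_i)\}$. Observe that the constraint set $\{0,1\}\cap(\{0,1\}+t_i)$ is a singleton when $t_i=\pm1$ (forcing $\epsilon_i$ to equal $1$ or $0$ respectively) and is the full set $\{0,1\}$ precisely when $t_i=0$. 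Hence the free choices in the digit string $(\epsilon_i)$ occur exactly at the positions where $t_i=0$, and $\G\cap(\G+t)$ is the image under the (bi-Lipschitz on its domain, since $\{f_0,f_1\}$ satisfies strong separation) coding map of the symbolic space $\prod_{i\geq 1}(\{0,1\}\cap(\{0,1\}+t_i))$.

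Next I would set up the covering and the lower bound. For the upper bound: let $n_k\to\infty$ be a sequence realising the liminf, so that among the first $n_k$ digits the number of zeros is $\approx \underline d((t_i))\,n_k$. Covering $\G\cap(\G+t)$ by cylinders of depth $n_k$ (intervals of length $\asymp\alpha^{n_k}$), the number of such cylinders is exactly $2^{\#\{1\le i\le n_k: t_i=0\}}$, since the non-zero positions contribute no branching. A standard box-counting argument then gives $\dim_H(\G\cap(\G+t))\le\dim_B(\G\cap(\G+t))\le \liminf_k \frac{\log 2^{\#\{i\le n_k:t_i=0\}}}{-\log\alpha^{n_k}}=\frac{\log 2}{-\log\alpha}\,\underline d((t_i))$. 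For the lower bound I would build a self-similar-type Cantor subset or, more robustly, apply the mass distribution principle: place the natural Bernoulli-type measure $\mu$ that distributes mass uniformly over the free coordinates, and show that for $\mu$-a.e. point $x$ with coding $(\epsilon_i)$, $\liminf_{n}\frac{\log\mu(B(x,\alpha^n))}{\log\alpha^n}\ge \frac{\log 2}{-\log\alpha}\underline d((t_i))$; because the number of zeros among the first $n$ digits is at least $(\underline d((t_i))-o(1))n$ for all large $n$ by definition of liminf, the local dimension bound follows uniformly, and Frostman/Billingsley gives the matching lower bound on $\dim_H$.

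The main obstacle I anticipate is the boundary/overlap bookkeeping in the lower bound — one must ensure that distinct depth-$n$ cylinders of $\G\cap(\G+t)$ are genuinely separated by a gap comparable to $\alpha^n$, so that a ball of radius $\alpha^n$ meets only boundedly many cylinders. This is where the strong separation condition for $\{f_0,f_1\}$ enters decisively: the gap between the two level-one pieces $f_0(\G)$ and $f_1(\G)$ is $\alpha(1-\frac{\alpha}{1-\alpha})=\alpha\frac{1-2\alpha}{1-\alpha}>0$, which scales correctly at every level, so the coding map is a bi-Lipschitz homeomorphism onto $\G$ and the symbolic and metric dimension computations agree. Once separation is in hand, the measure estimate is routine. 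Finally, since the statement of \cite[Theorem 3.4]{Li_Xiao_1998} already gives the identity for $\alpha\in(0,1/3]$, I would remark that the only property of the parameter used in the argument above is the strong separation of $\{f_0,f_1\}$ together with the representation \eqref{eq:11}, both of which persist for $\alpha\in(1/3,1/2)$ as long as $t\in\u_\alpha$; hence the proof of \cite{Li_Xiao_1998} carries over verbatim, which is precisely the claim.
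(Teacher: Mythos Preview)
Your proposal is correct and aligns with the paper's treatment: the paper gives no independent proof of this lemma but simply records that \cite[Theorem~3.4]{Li_Xiao_1998} was proved for $\alpha\in(0,1/3]$ and asserts that ``the proof translates over to the more general case where $\alpha\in(1/3,1/2)$ and $t\in\u_\alpha$.'' Your sketch spells out exactly why that translation works --- the representation \eqref{eq:11} and the strong separation of $\{f_0,f_1\}$ are the only ingredients needed, and both persist for $\alpha\in(1/3,1/2)$ once $t\in\u_\alpha$ --- so you have supplied the detail the paper omits while following the same route.
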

 Lemma \ref{lem:23} will be a vital tool in proving Theorems \ref{th:11} and \ref{th:12}. This result allows us to reinterpret Theorems \ref{th:11} and \ref{th:12} in terms of statements regarding the frequency of $0$'s that can occur within an element of $\U_\alpha.$

In what follows, for an infinite sequence $(t_i)\in\{-1,0,1\}^{\mathbb{N}}$ we will use the notation $$ \overline{d}((t_i)):=\limsup_{n\to \infty} \frac{\#\{1\leq i \leq n: t_i=0\}}{n}.$$ When this limit exists, i.e., $\underline{d}((t_i))=\overline{d}((t_i))$, we simply use $d((t_i)).$
For a word $ t_1 \ldots t_n \in\{-1,0,1\}^n$ we will use the notation $$d(t_1\cdots t_n):=\frac{\#\{1\leq i \leq n: t_i=0\}}{n}.$$

\section{Proof of Theorem \ref{th:11}}
In this section we prove Theorem \ref{th:11}. We start by defining the Thue-Morse sequence and its natural generalisation.

Let $(\tau_i)_{i=0}^{\infty}\in\{0,1\}^{\mathbb{N}}$ denote the classical Thue-Morse sequence. This sequence is defined iteratively as follows. Let $\tau_0=0$ and if $\tau_i$ is defined for some $i\geq 0$, set $\tau_{2i}=\tau_i$ and $\tau_{2i+1}=1-\tau_i.$ Then the sequence $(\tau_i)_{i=0}^{\infty}$ begins with
 $$0110\, 1001\, 1001\, 0110\, 1001\, 0110 0110\ldots$$ For more on this sequence we refer the reader to \cite{Allouche_Shallit_1999}. Within expansions in non-integer bases the sequence $(\tau_i)_{i=0}^{\infty}$ is important for many reasons. In \cite{Komornik_Loreti_1998} Komornik and Loreti proved that the unique $\alpha$ for which $(\delta_{i}(\alpha))=(\tau_i)_{i=1}^{\infty}$ is the largest $\alpha\in(1/2,1)$ for which $1$ has a unique $\alpha$-expansion. This $\alpha$ has since become known as the \emph{Komornik-Loreti constant}. Interesting connections between the size of $\u_{\alpha}$ and the Komornik Loreti constant were made in \cite{Glendinning_Sidorov_2001}. Using the Thue-Morse sequence we define a new sequence $(\lambda_i)\in \{-1,0,1\}^{\mathbb{N}}$ as follows $$(\lambda_i)_{i=1}^{\infty}=(\tau_i-\tau_{i-1})_{i=1}^{\infty}.$$ We denote the unique $\alpha\in(1/2, 1)$ for which $\sum_{i=1}^{\infty}(1+\lambda_i)\alpha^i=1$ by $\alpha_{KL}$.
Our choice of subscript is because the constant $\alpha_{KL}$ is a type of generalised Komornik-Loreti constant. This number is transcendental (cf.~\cite{Komornik_Loreti_2002}) and is approximately $0.39433.$
This sequence satisfies the property
\begin{equation}
\label{eq:31}
\begin{split}
&\lambda_1=1,\quad\quad\lambda_{2^{n+1}}=1-\lambda_{2^n};\\
&\lambda_{2^n+i}=- \lambda_i \quad \textrm{ for any}~~ 1\leq i<2^n.
\end{split}
\end{equation}
This property can be deduced directly from \cite[Lemma 5.2]{Komornik_Loreti_2002}.
So, the sequence $(\lambda_i)_{i=1}^\infty$ starts at
\[
10\,(-1)1\,(-1)010\;(-1)01(-1)\,10(-1)1\cdots.
\] It will be useful when it comes to determining the frequency of zeros within certain sequences.

To each $n\in\mathbb{N}$ we associate the finite sequence $w_n=\lambda_1\cdots \lambda_{2^{n}}.$  By (\ref{eq:31}) the following property of $\omega_n$ can be verified.
\begin{equation}
\label{eq:32}
 w_{n+1}^{-}=w_n\overline{w_n}.
\end{equation}
 Here the reflection of   $w_n$ w.r.t. the digit set $\{-1,0,1\}$ is defined by $\overline{w_n}:=(-\lambda_1)(-\lambda_2)\cdots(-\lambda_{2^n}).$

We now prove two lemmas that allow us to prove statements $(1)$ and $(2)$ from Theorem \ref{th:11}.
\begin{lemma}
\label{lem:31}
For $n\geq 2$ the following inequalities hold:
\begin{equation}
\label{eq:33}
\#\{1\leq i\leq 2^n: \lambda_i=0\}=2\#\{1\leq i\leq 2^{n-1}: \lambda_i=0\} - 1 \textrm{ if }n\textrm{ is even;}
\end{equation}
\begin{equation}
\label{eq:34}
\#\{1\leq i\leq 2^n: \lambda_i=0\}=2\#\{1\leq i\leq 2^{n-1}: \lambda_i=0\} + 1 \textrm{ if }n\textrm{ is odd}.
\end{equation}
Moreover
\begin{equation}
\label{eq:35}
d(w_n)=-\sum_{i=1}^{n}\Big(\frac{-1}{2}\Big)^{i}
\end{equation} for all $n\in \mathbb{N}.$
\end{lemma}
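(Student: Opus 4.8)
The plan is to reduce the whole lemma to a single linear recurrence for $z_n:=\#\{1\le i\le 2^n:\lambda_i=0\}$, the quantity appearing in \eqref{eq:33} and \eqref{eq:34} and, after division by $2^n$, in \eqref{eq:35}. The first preliminary step is to locate $\lambda$ at the powers of $2$: from $\lambda_1=1$ and $\lambda_{2^{n+1}}=1-\lambda_{2^n}$ in \eqref{eq:31} a one-line induction gives $\lambda_{2^n}\in\{0,1\}$ for all $n$, with $\lambda_{2^n}=1$ when $n$ is even and $\lambda_{2^n}=0$ when $n$ is odd.

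The core step is to read off a recursion for $z_n$ from the word identity \eqref{eq:32}. Since $w_{n+1}^-=w_n\overline{w_n}$ and the operation $\omega\mapsto\omega^-$ changes only the last symbol, $w_{n+1}$ agrees with $w_n\overline{w_n}$ on its first $2^{n+1}-1$ symbols and has last symbol $(-\lambda_{2^n})+1=1-\lambda_{2^n}$. Because reflection fixes the symbol $0$, the block $\overline{w_n}$ contains exactly $z_n$ zeros, so $w_n\overline{w_n}$ contains $2z_n$ zeros; deleting its last symbol $-\lambda_{2^n}$ removes a zero precisely when $\lambda_{2^n}=0$, and restoring the last symbol $1-\lambda_{2^n}$ of $w_{n+1}$ adds a zero precisely when $\lambda_{2^n}=1$. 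Hence $z_{n+1}=2z_n+1$ if $\lambda_{2^n}=1$ (equivalently $n$ even) and $z_{n+1}=2z_n-1$ if $\lambda_{2^n}=0$ (equivalently $n$ odd), so in all cases
\[
z_{n+1}=2z_n+(-1)^n .
\]
Specialising to $n-1$ in place of $n$ and separating the two parities gives \eqref{eq:33} and \eqref{eq:34} for all $n\ge 2$; the base value $z_1=1$ (as $w_1=10$) starts the recursion.

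For \eqref{eq:35} I would simply solve $z_{n+1}=2z_n+(-1)^n$ with $z_1=1$: the ansatz $z_n=c\,2^n-\frac{1}{3}(-1)^n$ together with the initial condition forces $c=\frac{1}{3}$, so $z_n=\frac{2^n-(-1)^n}{3}$ and therefore $d(w_n)=z_n/2^n=\frac{1}{3}\bigl(1-(-\frac{1}{2})^n\bigr)$; on the other hand the finite geometric sum gives $-\sum_{i=1}^n(-\frac{1}{2})^i=\frac{1}{3}\bigl(1-(-\frac{1}{2})^n\bigr)$, and the two expressions coincide. (Alternatively, \eqref{eq:35} follows by induction on $n$ directly from \eqref{eq:33}--\eqref{eq:34} once the case $n=1$ is checked.) I do not expect any real obstacle here; the only point that needs a little care is the boundary bookkeeping in passing from $w_n\overline{w_n}$ to $w_{n+1}$ — that is, correctly accounting for the off-by-one in \eqref{eq:32} and lining up the parity of $\lambda_{2^n}$ with the parity of $n$.
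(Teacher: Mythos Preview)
Your proposal is correct and follows essentially the same approach as the paper: both arguments hinge on identifying the parity of $\lambda_{2^n}$ and on the fact that the second half of $w_{n+1}$ is, up to its last digit, the reflection of $w_n$ and therefore has the same number of zeros. The paper reads this off directly from \eqref{eq:31} and then derives \eqref{eq:35} by induction, whereas you route it through \eqref{eq:32} and solve the unified recursion $z_{n+1}=2z_n+(-1)^n$ in closed form; these are only cosmetic differences.
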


\begin{proof}
We begin by observing that $w_1=10,$ so $d(w_{1})=1/2$ and (\ref{eq:35}) holds for $n=1$. We now show that \eqref{eq:33} and \eqref{eq:34} imply \eqref{eq:35} via an inductive argument. Let us assume \eqref{eq:35} is true for odd $N\in\mathbb{N}$.  Then
\begin{align*}
d(w_{N+1})&= \frac{\#\{1\leq i\leq 2^{N+1}: \lambda_{i}=0\}}{2^{N+1}}\\
&= \frac{2\#\{1\leq i\leq 2^{N}: \lambda_i=0\} - 1}{2^{N+1}}\\
&=d(w_N)-\frac{1}{2^{N+1}}\\
&=-\sum_{i=1}^{N+1}\Big(\frac{-1}{2}\Big)^i
\end{align*}In our second equality we used \eqref{eq:33}. The case where $N$ is even is done similarly. Proceeding inductively we may conclude that \eqref{eq:35} holds assuming \eqref{eq:33} and \eqref{eq:34}.

It remains to show \eqref{eq:33} and \eqref{eq:34} hold. For $n=1$ we know that $w_1=10,$ \eqref{eq:32} therefore implies that the last digit of $w_2$ equals $1.$ What is more, repeatedly applying \eqref{eq:32} we see that the last digit of $w_n$ equals $0$ if $n$ is odd, and equals $1$ if $n$ is even. Property \eqref{eq:31} implies that   $\lambda_{2^n+i}=0$ if $\lambda_{i}=0$ for any $1\le i<2^n$. Therefore, when $n$ is even we see that
\begin{align*}
\#\{1\leq i\leq 2^n: \lambda_i=0\}&=\#\{1\leq i\leq 2^{n-1}: \lambda_i=0\}+\#\{2^{n-1}+1\leq i\leq 2^n: \lambda_i=0\}\\
& = \#\{1\leq i\leq 2^{n-1}: \lambda_i=0\} +  \#\{1\leq i\leq 2^{n-1}: \lambda_i=0\} -1 \\
&= 2\#\{1\leq i\leq 2^{n-1}: \lambda_i=0\} -1.
\end{align*} Thus \eqref{eq:33} is proved. Equation \eqref{eq:34} is proved similarly.
\end{proof}

Lemma \ref{lem:31} determines the frequency of $0$'s within the finite sequences $w_n$. For our proof of Theorem \ref{th:11} we also need to know the frequency of $0$'s within the sequence $(\lambda_i)_{i=1}^{\infty}.$

\begin{lemma}
\label{lem:32}
$$d((\lambda_i))=-\sum_{i=1}^{\infty}\Big(\frac{-1}{2}\Big)^{i}=\frac{1}{3}.$$
\end{lemma}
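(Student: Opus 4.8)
The plan is to compute $d((\lambda_i))$ as the limit of the block frequencies $d(w_n)$ along the dyadic scale, and then upgrade this to a genuine density by controlling the partial-sum behaviour between consecutive powers of $2$. By Lemma \ref{lem:31}, equation \eqref{eq:35}, we already have $d(w_n)=-\sum_{i=1}^{n}(-1/2)^i$, and since this is a convergent alternating geometric series, $\lim_{n\to\infty} d(w_n) = -\sum_{i=1}^{\infty}(-1/2)^i = \frac{-(-1/2)}{1-(-1/2)} = \frac{1/2}{3/2} = \frac13$. So the subsequence of frequencies at $n = 2^m$ converges to $1/3$; the only remaining task is to show that no oscillation can occur at intermediate indices.

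First I would fix $n$ and write $2^{m} \le n < 2^{m+1}$, so that the prefix $\lambda_1\cdots\lambda_n$ splits as $w_m$ followed by an initial segment of the block occupying positions $2^m+1,\dots,2^{m+1}$. By the self-similar structure \eqref{eq:31} (equivalently \eqref{eq:32}), that block is $\overline{w_m}$ up to its last digit, so the number of zeros in positions $2^m+1,\dots,2^m+k$ differs from $\#\{1\le i\le k:\lambda_i=0\}$ by at most $1$ (the reflection $\epsilon\mapsto -\epsilon$ fixes the digit $0$, and \eqref{eq:32} only perturbs the final digit). Hence
\[
\big| \#\{1\le i\le n:\lambda_i=0\} - \#\{1\le i\le 2^m:\lambda_i=0\} - \#\{1\le i\le n-2^m:\lambda_i=0\}\big| \le 1.
\]
Since both $\#\{1\le i\le 2^m:\lambda_i=0\}$ and $\#\{1\le i\le n-2^m:\lambda_i=0\}$ are at most their respective block lengths, and those block lengths are $2^m$ and $n-2^m$ with $n-2^m < 2^m \le n$, one gets the crude bound $\left| \frac{\#\{1\le i\le n:\lambda_i=0\}}{n} - \frac{\#\{1\le i\le 2^m:\lambda_i=0\}}{2^m}\right| = O(2^{m}/n + 1/n)$; more carefully, writing $f(n)=\#\{1\le i\le n:\lambda_i=0\}$ the recursion $f(n) = f(2^m) + f(n-2^m) + O(1)$ together with $f(2^m) = d(w_m)2^m = (1/3 + O(2^{-m}))2^m$ lets one run an induction showing $f(n) = n/3 + O(\log n)$, or at the very least $f(n)/n \to 1/3$.

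The step I expect to be the only real (if mild) obstacle is making the intermediate-index estimate fully rigorous without a tedious case analysis: one must track the $\pm1$ boundary corrections coming from \eqref{eq:32} across all scales $m' \le m$, and confirm they accumulate only logarithmically rather than linearly. The clean way to do this is to prove by induction on $m$ the two-sided bound $\big|f(n) - n/3\big| \le C\,(m+1)$ for all $n \le 2^m$ and a fixed absolute constant $C$ (e.g. $C=1$ should work after checking small cases), using $f(n) = f(2^{m-1}) + f(n-2^{m-1}) + e(n)$ with $|e(n)| \le 1$, $f(2^{m-1}) = 2^{m-1}/3 + r_m$ with $|r_m|\le 2^{m-1}\cdot|\sum_{i>m-1}(-1/2)^i| \le 2^{m-1}\cdot 2^{-(m-1)} = 1$ by \eqref{eq:35}, and $n - 2^{m-1} \le 2^{m-1}$. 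Dividing by $n \ge 2^{m-1}$ and letting $n\to\infty$ (so $m\to\infty$) forces $\overline d((\lambda_i)) = \underline d((\lambda_i)) = 1/3$, which is exactly the assertion $d((\lambda_i)) = -\sum_{i=1}^{\infty}(-1/2)^i = 1/3$.
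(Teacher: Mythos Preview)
Your proposal is correct, and it rests on the same structural fact the paper uses---namely that property \eqref{eq:31} makes the zero-count in positions $2^m+1,\dots,2^m+k$ equal to the zero-count in positions $1,\dots,k$ (for $k<2^m$), because the map $\epsilon\mapsto-\epsilon$ fixes the digit $0$. Where you and the paper differ is in how this is exploited.

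The paper writes $n$ in binary, $n=\sum_{j=0}^{k}\epsilon_j 2^j$, and decomposes $\lambda_1\cdots\lambda_n$ into consecutive blocks of lengths $2^k,\epsilon_{k-1}2^{k-1},\dots,\epsilon_0 2^0$; by \eqref{eq:31} each block carries exactly as many zeros as the corresponding initial segment $\lambda_1\cdots\lambda_{\epsilon_j 2^j}$. It then invokes \eqref{eq:35} on each block and runs an $\varepsilon$-argument to squeeze the ratio to $1/3$. You instead make a single top-bit split $f(n)=f(2^{m})+f(n-2^{m})+e(n)$ and induct, obtaining the sharper quantitative bound $|f(n)-n/3|=O(\log n)$. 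Your recursion, fully unfolded, \emph{is} the paper's binary decomposition, so the two arguments are close cousins; yours is a tidier packaging and yields more. Two small remarks: for $2^m\le n<2^{m+1}$ one actually has $e(n)=0$ (the boundary digit $\lambda_{2^{m+1}}$ is never reached), so the error only enters at the exact power of $2$; and from \eqref{eq:35} one computes $|f(2^{m-1})-2^{m-1}/3|=1/3$, so your constant $C=1$ indeed suffices.
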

\begin{proof}
Let us begin by fixing $\varepsilon>0.$ Let $N\in\mathbb{N}$ be sufficiently large such that
\begin{equation}
\label{eq:36}
\Big|\frac{-\sum_{i=1}^{n}(-1/2)^i}{1/3}-1\Big|<\varepsilon
\end{equation} for all $n\geq N$. Now let us pick $N'\in\mathbb{N}$ large enough  such that
\begin{equation}
\label{eq:37}
\frac{\sum_{j=0}^{N-1}  2^j}{N'}<\varepsilon
\end{equation}

Let $n\geq N'$ be arbitrary and write $n=\sum_{j=0}^{k}\epsilon_{j}2^{j},$ where we assume $\epsilon_k=1$. By splitting $(\lambda_i)_{i=1}^n$ into its first $2^{k}$ digits, then the next $2^{k-1}$ digits, then the next $2^{k-2}$ digits, etc, we obtain:
\begin{align}
\label{eq:38}
\frac{\#\{1\leq i\leq n:\lambda_i=0\}}{n}&=\frac{\#\{1\leq i\leq 2^{k}:\lambda_i=0\}}{n}\\
&+\sum_{l=0}^{k-1}\frac{\#\{\sum_{j=k-l}^{k}\epsilon_j 2^j+1 \leq i \leq \sum_{j=k-l-1}^{k}\epsilon_j 2^j:\lambda_i=0\}}{n}.\nonumber
\end{align}
By repeatedly applying \eqref{eq:31} we see
\begin{equation}\label{eq:39}
\begin{split}
&\quad~\#\{1 \leq i \leq \epsilon_{k-l-1} 2^{k-l-1}:\lambda_i=0\}\\
&= \#\{\epsilon_{k-l}2^{k-l}+1 \leq i \leq \epsilon_{k-l}2^{k-l}+ \epsilon_{k-l-1} 2^{k-l-1}:\lambda_i=0\} \\
& =\cdots \\
& = \#\Big\{\sum_{j=k-l}^{k}\epsilon_j 2^j+1 \leq i \leq \sum_{j=k-l-1}^{k}\epsilon_j 2^j:\lambda_i=0\Big\}
\end{split}
\end{equation}
Substituing \eqref{eq:39} into \eqref{eq:38} we obtain
\begin{align*}
 \frac{\#\{1\leq i\leq n:\lambda_i=0\}}{n}&=\frac{\#\{1\leq i\leq 2^{k}:\lambda_i=0\}}{n}\\
&+\sum_{l=0}^{k-1}\frac{\#\{1 \leq i \leq \epsilon_{k-l-1} 2^{k-l-1}:\lambda_i=0\}}{n}.\nonumber
\end{align*}
By ignoring lower order terms and applying Lemma \ref{lem:31}, \eqref{eq:36}  and \eqref{eq:37} we obtain the lower bound
\begin{align*}
\label{lowerbound}
\frac{\#\{1\leq i\leq n:\lambda_i=0\}}{n}&\geq \frac{\#\{1\leq i\leq 2^{k}:\lambda_i=0\}}{n}+\sum_{l=0}^{k-N-1}\frac{\#\{1 \leq i \leq \epsilon_{k-l-1} 2^{k-l-1}:\lambda_i=0\}}{n}\\
& \geq \frac{(1-\varepsilon)}{3}\Big(\frac{ 2^{k}}{n}+\sum_{l=0}^{k-N-1}\frac{\epsilon_{k-l-1} 2^{k-l-1}}{n}\Big)\\
 &= \frac{(1-\varepsilon)}{3}\Big(\frac{\sum_{j=0}^{k}\epsilon_j2^j-\sum_{j=0}^{N-1}\epsilon_j2^j}{n}\Big)\\
&\ge \frac{(1-\varepsilon)}{3}\Big(1-\frac{\sum_{j=0}^{N-1}2^j}{n}\Big)\\
& \geq \frac{(1-\varepsilon)^2}{3}.
\end{align*}
As $\varepsilon>0$ was arbitrary this implies $\underline{d}((\lambda_i))\ge 1/3.$ By a similar argument it can be shown that $\overline{d}((\lambda_i))\le 1/3.$ Thus $d((\lambda_i))=1/3.$
\end{proof}
Statements $(1)$ and $(2)$ from Theorem \ref{th:11} follow from Lemma \ref{lem:23},  Lemma \ref{lem:31}, and Lemma \ref{lem:32}, when combined with the following results from \cite[Lemma 4.12]{Kong_Li_Dekking_2010}.

\begin{lemma}
\label{lem:33}
Let $\alpha\in(\alpha_{KL},1/2),$ then there exists $n^{*}\in\mathbb{N}$ such that every element of $\U_{\alpha}\setminus\set{(-1)^\f, 1^\f}$ ends with one of
$$(0)^{\infty}, (w_1\overline{w_1})^{\infty}, \ldots, (w_{n^{*}}\overline{w_{n^{*}}})^{\infty}.$$
\end{lemma}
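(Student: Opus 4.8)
\textbf{Proof plan for Lemma \ref{lem:33}.}

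The plan is to convert the statement into a purely combinatorial assertion about sequences in $\{-1,0,1\}^{\mathbb{N}}$ and then exploit the lexicographic characterisation of $\U_\alpha$ given by Lemma \ref{lem:21}. Fix $\alpha\in(\alpha_{KL},1/2)$ and let $(\delta_i)=(\delta_i(\alpha))$ be the quasi-greedy $\alpha$-expansion of $1$ for the digit set $\{0,1,2\}$; after the substitution $0\to-1$, $1\to 0$, $2\to 1$ it is equivalent to work with the digit set $\{-1,0,1\}$ throughout. Since $\alpha>\alpha_{KL}$, Lemma \ref{lem:22} (strict monotonicity) gives $(\delta_i)\succ (1+\lambda_i)_{i=1}^\infty = \lim_n (w_n\overline{w_n})$, the limit being taken in the sense that $w_n\overline{w_n}$ is a prefix of $(1+\lambda_i)$ by \eqref{eq:32}. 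A compactness/continuity argument then yields an $n^*\in\mathbb{N}$ and a word $\sigma$ with $|\sigma|=2^{n^*}$ such that $\delta_1\cdots\delta_{2^{n^*}} = (w_{n^*}\overline{w_{n^*}})^+$ in $\{-1,0,1\}$-coordinates, i.e. $(\delta_i)$ strictly dominates $(1+\lambda_i)$ already at level $2^{n^*}$; quantitatively, $n^*$ is the least integer with $(\delta_i)_{i=1}^{2^{n^*}} \succ w_{n^*}\overline{w_{n^*}}$ reading in the $\{-1,0,1\}$ alphabet.

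Next I would analyse the structure of an arbitrary $(\epsilon_i)\in\U_\alpha\setminus\{(-1)^\infty,1^\infty\}$ using Lemma \ref{lem:21}. The two conditions say that every tail $(\epsilon_{n+i})_i$ that is not constantly the top digit lies strictly below $(\delta_i)$, and every tail whose reflection is not constantly the top digit has reflection strictly below $(\delta_i)$; equivalently $\overline{(\delta_i)} \prec (\epsilon_{n+i})_i$ for such tails. So every non-extremal tail of $(\epsilon_i)$ is trapped strictly between $\overline{(\delta_i)}$ and $(\delta_i)$. The key point is that the words $w_k\overline{w_k}$ are exactly the ``bifurcation words'' of the admissibility conditions: by \eqref{eq:32} and the self-similar recursion \eqref{eq:31}, a tail which never exits the admissible window $(\overline{(\delta_i)},(\delta_i))$, yet never becomes eventually extremal, is forced — via the standard substitution-combinatorics argument for generalised golden-mean / Thue–Morse-type shifts — to eventually agree with the periodic point $(w_k\overline{w_k})^\infty$ for some $k$. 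Since $(\delta_i)$ dominates $(1+\lambda_i)$ only down to level $n^*$, only the words $w_k\overline{w_k}$ with $k\le n^*$ (together with the degenerate $k=0$ giving $0^\infty$) can actually occur, which is precisely the list $(0)^\infty,(w_1\overline{w_1})^\infty,\dots,(w_{n^*}\overline{w_{n^*}})^\infty$. I would make this precise by induction on $k$: if a tail of $(\epsilon_i)$ agrees with $w_k$ then admissibility plus the recursion \eqref{eq:31}–\eqref{eq:32} forces the next $2^k$ digits to be $\overline{w_k}$ or to trigger the next level $w_{k+1}\overline{w_{k+1}}$, and the latter is impossible once $k\ge n^*$ because it would violate $(\epsilon_{n+i})_i\prec(\delta_i)$ (or its reflected analogue).

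The main obstacle is the combinatorial heart of the last paragraph: showing rigorously that a sequence all of whose non-extremal tails lie in the closed window $[\overline{(\delta_i)},(\delta_i)]$ and which is not eventually constant must eventually be periodic with one of the prescribed periods. This is where one must use the precise hierarchical structure of $(\lambda_i)$ encoded in \eqref{eq:31} and the relation \eqref{eq:32}, essentially re-running the Komornik–Loreti/Allouche–Cosentino style analysis of the Thue–Morse quasi-greedy expansion in the $\{-1,0,1\}$ setting. Fortunately this is exactly the content of \cite[Lemma 4.12]{Kong_Li_Dekking_2010}, which the statement invokes, so in the write-up I would isolate the window-trapping consequence of Lemma \ref{lem:21}, cite \cite[Lemma 4.12]{Kong_Li_Dekking_2010} for the periodicity dichotomy, and then use the quantitative choice of $n^*$ above (from $\alpha>\alpha_{KL}$ via Lemma \ref{lem:22}) to cut the list down to finitely many periods. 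The remaining bookkeeping — checking that each $(w_k\overline{w_k})^\infty$ with $k\le n^*$ is itself admissible, so that the list is sharp — is a routine verification using \eqref{eq:31} and Lemma \ref{lem:21}.
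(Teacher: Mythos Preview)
The paper does not actually prove this lemma: it is stated verbatim as a result quoted from \cite[Lemma~4.12]{Kong_Li_Dekking_2010}, with no argument supplied. Your plan ultimately defers to that same citation for the substantive combinatorial step, so at the level of what is really being invoked you and the paper coincide; the extra scaffolding you add (the window-trapping consequence of Lemma~\ref{lem:21}, the inductive picture via \eqref{eq:31}--\eqref{eq:32}) is commentary rather than an independent proof.

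That said, the sketch contains a genuine sign error that would derail an attempt to flesh it out. By Lemma~\ref{lem:22} the map $\alpha\mapsto(\delta_i(\alpha))$ is strictly \emph{decreasing}, so $\alpha>\alpha_{KL}$ gives $(\delta_i(\alpha))\prec(1+\lambda_i)$, not $\succ$ as you write. The correct mechanism is the opposite of ``$(\delta_i)$ dominates $(1+\lambda_i)$ down to level $n^*$'': the admissibility window $[\,\overline{(\delta_i)},(\delta_i)\,]$ is \emph{narrower} than at $\alpha_{KL}$, and $n^*$ is the largest $k$ for which $(w_k\overline{w_k})^\infty$ still fits inside it (equivalently, the largest $k$ with $w_k\overline{w_k}\preceq\delta_1\cdots\delta_{2^{k+1}}$ in the $\{-1,0,1\}$ alphabet). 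A smaller quibble: $w_n\overline{w_n}=w_{n+1}^{-}$ by \eqref{eq:32}, so $w_n\overline{w_n}$ is not a prefix of $(\lambda_i)$; the prefix is $w_{n+1}$, differing in the final digit. Once these directions are corrected, your outline is a reasonable gloss on what \cite[Lemma~4.12]{Kong_Li_Dekking_2010} proves, but as written the monotonicity is backwards and the role of $n^*$ is inverted.
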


\begin{lemma}
\label{lem:34}
Each element of $\U_{\alpha_{KL}}\setminus\set{(-1)^\infty, 1^\infty}$ is either eventually periodic with period contained in $$(0)^{\infty}, (w_1\overline{w_1})^{\infty}, (w_2\overline{w_2})^\infty, \ldots, $$ or ends with a sequence of the form $$(w_0\overline{w_{0}})^{k_{0}}(w_0\overline{w_{i_1'}})^{k_{0}'}(w_{i_1}\overline{w_{i_1}})^{k_{1}}(w_{i_{1}}\overline{w_{i_{2}'}})^{k_{1}'}\cdots(w_{i_n}\overline{w_{i_{n}}})^{k_{n}}(w_{i_n}\overline{w_{i_{n+1}'}})^{k_{n}'}\cdots,$$ and its reflection, where $k_n\geq 0,$ $k_n'\in\{0,1\}$ and
\[0<i_1'\leq  i_1<i_2'\leq i_2<\cdots \leq i_n< i_{n+1}' \leq i_{n+1}<\cdots.\]
\end{lemma}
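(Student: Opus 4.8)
The plan is to deduce Lemma~\ref{lem:34} from the lexicographic characterisation of $\U_{\alpha_{KL}}$ supplied by Lemma~\ref{lem:21}, together with the dyadic recursion \eqref{eq:31}--\eqref{eq:32} for $(\lambda_i)$; this is exactly \cite[Lemma 4.12]{Kong_Li_Dekking_2010} transported through the digit substitution $0\to-1,\ 1\to0,\ 2\to1$, so I argue directly in the alphabet $\{-1,0,1\}$. The first step is to identify the relevant quasi-greedy expansion of $1$. The word $(1+\lambda_i)_{i=1}^{\infty}\in\{0,1,2\}^{\mathbb{N}}$ is infinite, since $1+\lambda_i\ne0$ for infinitely many $i$, and \eqref{eq:31} gives $\lambda_{k+1}\lambda_{k+2}\cdots\preceq\lambda_1\lambda_2\cdots$ for every $k\ge0$ (this being part of \cite[Lemma 5.2]{Komornik_Loreti_2002}). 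Hence by Lemma~\ref{lem:22}, $(1+\lambda_i)$ is the quasi-greedy $\alpha$-expansion of $1$ for exactly one $\alpha\in[1/3,1)$, and that $\alpha$ is $\alpha_{KL}$, since it solves $\sum_{i=1}^{\infty}(1+\lambda_i)\alpha^i=1$. Thus the $\{-1,0,1\}$-translate of $(\delta_i(\alpha_{KL}))$ is precisely $(\lambda_i)$, and Lemma~\ref{lem:21} becomes: a sequence $(\epsilon_i)\in\{-1,0,1\}^{\mathbb{N}}$ lies in $\U_{\alpha_{KL}}$ if and only if $(\epsilon_{n+i})\prec(\lambda_i)$ whenever $\epsilon_1\cdots\epsilon_n\ne1^n$, and $(\overline{\epsilon_{n+i}})\prec(\lambda_i)$ whenever $\epsilon_1\cdots\epsilon_n\ne(-1)^n$.

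Next I would exploit the self-similarity of $(\lambda_i)$. By \eqref{eq:31}, $(\lambda_i)$ is the pointwise limit of the words $w_n=\lambda_1\cdots\lambda_{2^n}$; every length-$2^n$ block of $(\lambda_i)$ beginning at a position congruent to $1$ modulo $2^n$ equals $w_n$ or $\overline{w_n}$; and \eqref{eq:32} says $w_{n+1}^-=w_n\overline{w_n}$, so at the next scale $w_n$ is followed either by $\overline{w_n}$, or by $w_n$ at a repetition. Now fix $(\epsilon_i)\in\U_{\alpha_{KL}}\setminus\{(-1)^\infty,1^\infty\}$. Since $(\epsilon_i)$ is neither of the two constant sequences, both clauses of the characterisation are active for all large $n$, whence $(\overline{\lambda_i})\prec(\epsilon_{n+i})\prec(\lambda_i)$ for all large $n$: every late tail of $(\epsilon_i)$ is squeezed strictly between $(\lambda_i)$ and its reflection. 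Running the recursive lexicographic comparison forced by this two-sided squeeze, and feeding in the block decomposition of $(\lambda_i)$ itself, one finds that beyond some index $(\epsilon_i)$ --- or, in the symmetric alternative, $(\overline{\epsilon_i})$ --- is a concatenation of blocks $w_i\overline{w_i}$. Testing the admissible continuations of a level-$k$ block (for instance $w_kw_k$, $w_k\overline{w_k}w_k$, $w_k\overline{w_k}\,\overline{w_k}$, and $w_k\overline{w_j}$ with $j<k$) against the appropriate dyadic prefixes of $(\lambda_i)$ and $(\overline{\lambda_i})$ then shows that a level-$k$ pattern may repeat as $(w_k\overline{w_k})^{k_n}$, may then be followed by at most one transition half-block $(w_k\overline{w_j})^{k_n'}$ with $k_n'\in\{0,1\}$ and $j\le k$, and that after such a transition the level must strictly increase. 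Iterating, the tail equals $(w_{i_1}\overline{w_{i_1}})^{k_1}(w_{i_1}\overline{w_{i_2'}})^{k_1'}(w_{i_2}\overline{w_{i_2}})^{k_2}\cdots$ with $0<i_1'\le i_1<i_2'\le i_2<\cdots$, the inequality $i_{n+1}'\le i_{n+1}$ recording that the half-block $\overline{w_{i_{n+1}'}}$ must extend inside a level-$i_{n+1}$ block; the degenerate cases --- the block level stabilising, which gives a tail $(w_k\overline{w_k})^\infty$, or the tail collapsing to $(0)^\infty$ --- yield the eventually periodic alternative.

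The main obstacle is the comparison described in the previous paragraph: the finite but delicate lexicographic induction showing that the two inequalities of Lemma~\ref{lem:21} are simultaneously satisfiable only by these interleaved Thue--Morse blocks, and that the block levels must obey $0<i_1'\le i_1<i_2'\le i_2<\cdots$. I would run it as an induction on the dyadic scale $2^k$, carrying the invariant ``beyond position $n$, $(\epsilon_i)$ is a concatenation of blocks of level $\ge k$ together with admissible transitions'', and passing from scale $k$ to scale $k+1$ via \eqref{eq:31} and \eqref{eq:32}, at each stage excluding the forbidden continuations by direct comparison with $(\lambda_i)$ and $(\overline{\lambda_i})$. This scale-by-scale bookkeeping --- together with the easy converse check that every sequence of the stated form does lie in $\U_{\alpha_{KL}}$ --- is the combinatorial core of \cite[Lemma 4.12]{Kong_Li_Dekking_2010}; the remaining steps, namely identifying $(\delta_i(\alpha_{KL}))$ and invoking Lemma~\ref{lem:21}, are routine.
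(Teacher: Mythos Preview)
The paper does not prove Lemma~\ref{lem:34} at all: it is stated, together with Lemma~\ref{lem:33}, as a quotation of \cite[Lemma~4.12]{Kong_Li_Dekking_2010}, and is used as a black box to deduce parts (1) and (2) of Theorem~\ref{th:11}. So there is no in-paper argument to compare against; your proposal is an attempt to reconstruct the cited proof rather than the paper's own.

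As a sketch of the Kong--Li--Dekking argument your outline is broadly on the right track: identifying $(\delta_i(\alpha_{KL}))$ with $(\lambda_i)$ via Lemma~\ref{lem:22}, feeding this into Lemma~\ref{lem:21}, and then running a scale-by-scale block analysis driven by \eqref{eq:31}--\eqref{eq:32} is indeed the shape of the proof. One point to fix, though: in your description of the admissible continuations you write that a level-$k$ block may be followed by a transition block $(w_k\overline{w_j})$ with $j\le k$, yet two sentences later you (correctly) record the chain $i_n<i_{n+1}'\le i_{n+1}$, which forces the index on the reflected half of the transition block to be \emph{strictly larger} than the current level. The transition is upward, not downward; the condition should read $j>k$ (with the subsequent main block at level $\ge j$). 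As written, the sketch is internally inconsistent on this point, and the induction you propose would not close without correcting it. Beyond that, the ``finite but delicate lexicographic induction'' you allude to really is where all the work sits, and your proposal remains a plan rather than a proof until those case checks are actually carried out.
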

By Lemmas \ref{lem:23},  \ref{lem:31} and \ref{lem:33} we may conclude
 $$D(\alpha)=\Big\{0, \frac{\log2}{-\log \alpha}\Big\}\cup\Big\{\frac{\log 2}{  \log \alpha} \sum_{i=1}^{n}\Big(\frac{-1}{2}\Big)^i:1 \leq n \leq n^*\Big\}$$ for some $n^*\in\mathbb{N}$ for $\alpha\in(\alpha_{KL},1/2)$. Whilst at the constant $\alpha_{KL}$ by  Lemmas \ref{lem:23},   \ref{lem:31},   \ref{lem:32} and   \ref{lem:34} we have
$$D(\alpha_{KL})=\Big\{0, \frac{\log2}{-\log \alpha_{KL}},\frac{\log 2}{-3\log\alpha_{KL}}\Big\}\cup\Big\{\frac{\log 2}{  \log \alpha_{KL}} \sum_{i=1}^{n}\Big(\frac{-1}{2}\Big)^i:1 \leq n < \infty\Big\}.$$
Thus statements $(1)$ and $(2)$ from Theorem \ref{th:11} hold. It remains to prove statement $(3).$

We start by introducing the following finite sequences. Let
\begin{equation}
\label{eq:310}
\zeta_{n}=0\lambda_1\cdots \lambda_{2^{n}-1}\textrm{ and }\eta_{n}=(-1)\lambda_{1}\ldots\lambda_{2^{n}-1}.
\end{equation} The following result was proved in \cite{Kong_Li_Dekking_2010}.

\begin{lemma}
\label{lem:35}
Let $\alpha\in(1/3,\alpha_{KL}),$ then there exists $n\in\mathbb{N}$ such that $\U_{\alpha}$ contains the subshift of finite type over the alphabet $\mathcal{A}=\{\zeta_n,\eta_n,\overline{\zeta_n},\overline{\eta_n\}}$ with transition matrix
\[ A=\left( \begin{array}{cccc}
0 & 1 & 1 & 0 \\
0 & 0 & 1 & 0 \\
1 & 0 & 0 & 1 \\
1 & 0 & 0 & 0 \end{array} \right).\]
\end{lemma}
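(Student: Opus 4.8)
The plan is to verify the admissibility criterion of Lemma \ref{lem:21} directly for an arbitrary sequence in the proposed subshift. Fix $\alpha\in(1/3,\alpha_{KL})$ and let $(\gamma_i)\in\{-1,0,1\}^{\mathbb{N}}$ denote the order-preserving $\{-1,0,1\}$-translate of the quasi-greedy expansion $(\delta_i(\alpha))$, so that by Lemma \ref{lem:21} a sequence $(\epsilon_i)$ lies in $\U_\alpha$ precisely when every tail $(\epsilon_{m+i})_{i\ge1}$ satisfies $(\epsilon_{m+i})\prec(\gamma_i)$ together with the reflected inequality $(-\epsilon_{m+i})\prec(\gamma_i)$ (the hypotheses on the prefixes being automatic here, since among the four blocks only $\overline{\eta_n}$ begins with $1$ and its second digit is $-1$, so no prefix of length $\ge2$ is constant). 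The permutation $\zeta_n\leftrightarrow\overline{\zeta_n}$, $\eta_n\leftrightarrow\overline{\eta_n}$ leaves the matrix $A$ invariant, hence the reflection $\overline{s}$ of an admissible sequence $s$ is again admissible; since reflection interchanges the two inequalities above, it suffices to prove that every tail of every admissible $s$ satisfies $(\epsilon_{m+i})\prec(\gamma_i)$.

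The key external input is the monotonicity of Lemma \ref{lem:22}: by the definition of $\alpha_{KL}$ the translate of $(\delta_i(\alpha_{KL}))$ is exactly $(\lambda_i)$, and since $\alpha\mapsto(\delta_i(\alpha))$ is strictly decreasing we obtain $(\lambda_i)\prec(\gamma_i)$ for $\alpha<\alpha_{KL}$. Let $L=L(\alpha)$ be the first index at which $\lambda_L<\gamma_L$. The crucial consequence is that any tail $u$ with $u\preceq(\lambda_i)$ automatically satisfies $u\prec(\gamma_i)$, and more generally any tail that agrees with $\lambda_1\lambda_2\cdots$ through index $L$ satisfies $u\prec(\gamma_i)$ as well. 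I therefore choose $n$ with $2^n>L$ and reduce the entire problem to the $\alpha_{KL}$-level statement that every tail of $s$ is $\preceq(\lambda_i)$, or at worst tracks $(\lambda_i)$ past index $L$.

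It remains to run the comparison tail by tail. Tails beginning at a block boundary start with one of $0\lambda_1=01$, $(-1)1$, $0\overline{\lambda_1}=0(-1)$ or $1\overline{\lambda_1}=1(-1)$; comparing the first one or two digits with $\lambda_1\lambda_2=10$ gives $\prec(\lambda_i)$ outright in each case. The substantial tails begin inside a block and have the form $\lambda_k\cdots\lambda_{2^n-1}$ (or its reflection) followed by the succeeding blocks. Using the self-similarity \eqref{eq:31} one rewrites $\lambda_k\cdots\lambda_{2^n-1}$ and compares it with the initial segment $\lambda_1\cdots\lambda_{2^n-k}$ of $(\lambda_i)$; the quasi-greedy inequality of Lemma \ref{lem:22}, namely $\lambda_k\lambda_{k+1}\cdots\preceq\lambda_1\lambda_2\cdots$, makes this segment $\preceq$, with equality forcing the comparison to pass to the following block. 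Here the transition matrix $A$ enters decisively: one checks, block by block and using \eqref{eq:31}--\eqref{eq:32}, that the continuations permitted by $A$ always keep the next leading digit no larger than the matching digit of $(\lambda_i)$, so the tail either drops strictly below $(\lambda_i)$ (and we are done) or continues to track $(\lambda_i)$ until index $L$ (and we are done since $2^n>L$).

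The main obstacle is exactly this last, borderline situation: proving that \emph{no} admissible continuation ever permits a tail to exceed $(\lambda_i)$ before index $L$. This is where the precise shape of $A$ must be used in full, verifying that each of its four rows excludes precisely those concatenations (a block whose leading digit is too large for the current matching position) that would produce a tail $\succ(\lambda_i)$, and that the recursion \eqref{eq:31}--\eqref{eq:32} propagates the matching correctly across block boundaries. Once these finitely many case checks are completed, Lemma \ref{lem:21} gives $s\in\U_\alpha$; as $s$ was an arbitrary admissible sequence, $\U_\alpha$ contains the entire subshift of finite type determined by $\mathcal{A}=\{\zeta_n,\eta_n,\overline{\zeta_n},\overline{\eta_n}\}$ and $A$, which is the assertion of the lemma.
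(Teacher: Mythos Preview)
The paper does not prove this lemma at all: it is quoted verbatim from \cite{Kong_Li_Dekking_2010} (see the sentence ``The following result was proved in \cite{Kong_Li_Dekking_2010}'' immediately preceding the statement). So there is no in-paper argument to compare against; your proposal is an attempt to reconstruct an independent proof.

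Your overall strategy is the natural one and is essentially what underlies the cited reference: use the monotonicity of Lemma~\ref{lem:22} to obtain $(\lambda_i)\prec(\gamma_i)$, fix the first index $L$ of disagreement, choose $n$ large relative to $L$, and then verify the tail inequalities of Lemma~\ref{lem:21} by comparing tails of admissible sequences with $(\lambda_i)$ rather than $(\gamma_i)$. The reflection symmetry you invoke to halve the work is also correct.

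That said, what you have written is an outline rather than a proof. The entire content of the lemma lives in the paragraph you label ``the main obstacle'', and you do not carry out any of it. Two points deserve emphasis. First, the phrase ``finitely many case checks'' understates what is required: when a tail begins at position $k$ close to the end of a block, the segment $\lambda_k\cdots\lambda_{2^n-1}$ has length $<L$, so the comparison must be propagated across \emph{several} subsequent blocks before one reaches index $L$; this is a genuine induction using \eqref{eq:31}--\eqref{eq:32}, not a finite table. Second, the assertion that the rows of $A$ ``exclude precisely those concatenations \ldots\ that would produce a tail $\succ(\lambda_i)$'' is exactly the lemma in disguise; you need to actually identify, for each equality case $\lambda_k\cdots\lambda_{2^n-1}=\lambda_1\cdots\lambda_{2^n-k}$, what $\lambda_{2^n-k+1}$ is and check it against the leading digits $0,-1,0,1$ of the four blocks permitted by $A$. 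Until those verifications are written out, the argument has a gap at its most delicate point.
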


\begin{proof}[Proof of   Theorem \ref{th:11} (3)]
Let $\alpha\in (1/3,\alpha_{KL})$ and let $n$ be as in Lemma \ref{lem:35}. So $\U_{\alpha}$ contains the subshift of finite type determined by the alphabet $\mathcal{A}$ and the transition matrix $A$. On closer examination we see that this subshift of finite type allows the free concatentation of the words $\omega_1=\zeta_n \overline{\zeta_n}$ and  $\omega_2=\zeta_n \eta_n\overline{\zeta_n}.$ Importantly $d(\omega_1)< d(\omega_2)$ by \eqref{eq:310}.  For any $c\in[ d(\omega_1),d(\omega_2)]$ we can pick a sequence of integers $k_{1}, k_{2},\ldots$ such that the sequence $(\epsilon_i)=\omega_{1}^{k_1}\omega_{2}^{k_{2}}\omega_{1}^{k_{3}}\omega_2^{k_{4}}\ldots$ satisfies $d((\epsilon_i))=c$. Thus by Lemma \ref{lem:23} the set $D(\alpha)$ contains the interval $[\frac{\log 2}{-\log \alpha}d(\omega_1),\frac{\log 2}{-\log \alpha}d(\omega_2)]$ and our proof is complete.
\end{proof} Appealing to standard arguments from multifractal analysis we could in fact show that for any $c\in(\frac{\log 2}{-\log \alpha}d(\omega_1),\frac{\log 2}{-\log \alpha}d(\omega_2))$ there exists a set of positive Hausdorff dimension within $\u_{\alpha}$ with frequency $c$.

\section{Proof of Theorem \ref{th:12} and Theorem \ref{th:13}}
We start this section by proving Theorem \ref{th:12}. Theorem \ref{th:13} will follow almost immediately as a consequence of the arguments used in the proof of Theorem \ref{th:12}. To prove Theorem \ref{th:12} we rely on the lexicographic description of $\U_{\alpha}$ and $(\delta_i(\alpha))$ given in Section $2$. We take this opportunity to again emphasise that the preliminary results that hold in Section $2$ for the alphabet $\{0,1,2\}$ have an obvious analogue that holds for the digit set $\{-1,0,1\}$.

It is instructive here to state our analogue of the quasi greedy $\alpha$-expansion   of $1$ when $\alpha=\frac{3-\sqrt{5}}{2}$ and our digit set is $\{-1,0,1\}$. A straightforward calculation proves that this analogue satisfies
\begin{equation}\label{eq:41}
\Big(\delta_i\Big(\frac{3-\sqrt{5}}{2}\Big)\Big)= 1(0)^{\infty}.
\end{equation}

We split our proof of Theorem \ref{th:12} into two lemmas.

\begin{lemma}
\label{lem:41}
Let $\alpha\in(\frac{3-\sqrt{5}}{2},1/2),$ then there exists $n\in \mathbb{N}$ such that any element of $\U_\alpha$ cannot contain the sequence $1(0)^{n}$ or $(-1)(0)^{n}$ infinitely often.
\end{lemma}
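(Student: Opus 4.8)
The plan is to use the lexicographic characterisation of $\U_\alpha$ from Lemma \ref{lem:21}, together with the fact that as $\alpha$ increases past $\frac{3-\sqrt5}{2}$ the quasi-greedy expansion $(\delta_i(\alpha))$ strictly increases past $1(0)^\infty$ (Lemma \ref{lem:22}, in its $\{-1,0,1\}$-form). Concretely, by \eqref{eq:41} and the strict monotonicity of $\alpha\mapsto(\delta_i(\alpha))$, for $\alpha>\frac{3-\sqrt5}{2}$ we have $(\delta_i(\alpha))\succ 10^\infty$; since $(\delta_i(\alpha))$ is infinite it must begin $10^{m}1\ldots$ or $11\ldots$ for some finite $m\ge 0$ (after adjusting the digit-set translation $0\to-1,1\to0,2\to1$). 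So there is a finite $n$ with $\delta_1(\alpha)\cdots\delta_n(\alpha)\succ (\text{the image of }1\,0^{n-1})$, i.e. some genuinely "large" finite prefix that $1(0)^n$-type tails would have to beat.

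The key step is then to derive a contradiction from the assumption that $(\epsilon_i)\in\U_\alpha$ contains the block $1(0)^n$ infinitely often. First I would fix $n$ large enough (larger than the index at which $(\delta_i(\alpha))$ first exceeds $10^\infty$) so that the translated prefix $\delta_1(\alpha)\cdots\delta_{n+1}(\alpha)$ is lexicographically strictly greater than the translated word corresponding to $1\,0^{n}$. Suppose the block $1(0)^n$ occurs starting at position $k+1$, i.e. $\epsilon_{k+1}=1$ and $\epsilon_{k+2}=\cdots=\epsilon_{k+n+1}=0$. I would apply the second clause of Lemma \ref{lem:21} (the reflection clause) at the position just before this block: since $(\epsilon_i)$ is not identically the minimal symbol (indeed $\epsilon_{k+1}=1$ shows $\epsilon_1\cdots\epsilon_k\ne$ the all-$(-1)$ word when $k\ge1$, and for $k=0$ one argues directly with the first clause applied to $1$ itself), we need $\overline{\epsilon_{k+1}}\,\overline{\epsilon_{k+2}}\cdots\prec(\delta_i(\alpha))$; but $\overline{\epsilon_{k+1}}\cdots\overline{\epsilon_{k+n+1}}=(-1)(1)^n$ in the $\{-1,0,1\}$ alphabet — wait, I should instead apply the first (greedy) clause: $\epsilon_1\cdots\epsilon_k\ne 1^k$ is not automatic, so the cleaner route is the reflection clause applied after the block, or the greedy clause. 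Let me restructure: since $\epsilon_{k+1}=1$, the prefix $\epsilon_1\cdots\epsilon_k$ is certainly not the all-maximal word only if it contains a non-$1$, which fails for $1^k$; to sidestep this I instead use that $\epsilon_{k+1}\cdots\epsilon_{k+n+1}=10^n$ forces, via the reflection clause applied at index $k$ to the reflected tail, the comparison $(1)(-1)^n\cdots\prec(\delta_i(\alpha))$, and since $\delta_1(\alpha)=1$ this reduces to comparing $(-1)^n\cdots$ against $\delta_2(\alpha)\delta_3(\alpha)\cdots$; because $(\delta_i(\alpha))\succ$ its translate of $10^\infty$ has $\delta_j(\alpha)$ eventually forced to exceed the corresponding entry, yielding the contradiction for $n$ large.

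The cleanest packaging: choose $n$ so that the translated finite word $W_n$ representing $1\,0^{n}$ satisfies $\delta_1(\alpha)\cdots\delta_{|W_n|}(\alpha)\succ W_n$ (possible since $(\delta_i(\alpha))\succ\lim$-of-these-words as $\alpha>\frac{3-\sqrt5}{2}$ and the sequence of such words converges lexicographically downward), then observe that an occurrence of $1(0)^n$ at position $k+1$ together with whatever follows produces a tail $\epsilon_{k+1}\epsilon_{k+2}\cdots$ whose reflection has prefix $W_n$, contradicting the reflection clause of Lemma \ref{lem:21} (which requires every reflected tail to be $\prec(\delta_i(\alpha))$), provided $\epsilon_1\cdots\epsilon_k\ne(-1)^k$ — and if $\epsilon_1\cdots\epsilon_k=(-1)^k$ then the tail itself is $\succeq$ the forbidden pattern and one contradicts the greedy clause instead (or simply notes $(\epsilon_i)$ then equals $(-1)^k10^n\cdots$, handled directly). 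The same argument with $(-1)(0)^n$ in place of $1(0)^n$ follows by the reflection symmetry $0\leftrightarrow$ reflection of the whole setup. The main obstacle I anticipate is bookkeeping the digit-set translation $0\to-1,1\to0,2\to1$ correctly throughout and handling the boundary cases $k=0$ and $\epsilon_1\cdots\epsilon_k$ equal to an extremal word, so that Lemma \ref{lem:21} applies cleanly; the lexicographic heart of the argument is short once $(\delta_i(\alpha))\succ$ the translate of $10^\infty$ is in hand.
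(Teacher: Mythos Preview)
Your proposal has a genuine error at the very first step: you claim that for $\alpha>\frac{3-\sqrt5}{2}$ the quasi-greedy expansion satisfies $(\delta_i(\alpha))\succ 1(0)^\infty$, but Lemma~\ref{lem:22} says the map $\alpha\mapsto(\delta_i(\alpha))$ is strictly \emph{decreasing}. Combined with \eqref{eq:41} this gives $(\delta_i(\alpha))\prec 1(0)^\infty$ for $\alpha>\frac{3-\sqrt5}{2}$. Since $\delta_1(\alpha)=1$ on $(1/3,1/2)$, this means $(\delta_i(\alpha))$ begins $1(0)^k(-1)\cdots$ for some $k\ge 0$, not $10^m1\cdots$ as you wrote.

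This reversal is exactly why the rest of your argument stalls and you cycle between the two clauses of Lemma~\ref{lem:21}. With the correct direction the proof is immediate and uses the \emph{first} (greedy) clause, not the reflection clause: if $(\epsilon_i)\in\U_\alpha$ contains $1(0)^{k+1}$ starting at position $m+1$, then the tail $(\epsilon_{m+i})_{i\ge1}$ begins $1(0)^{k+1}\cdots\succ 1(0)^k(-1)\cdots=(\delta_i(\alpha))$, contradicting the first clause whenever $\epsilon_1\cdots\epsilon_m\ne 1^m$. Since the block occurs infinitely often and contains zeros, some occurrence has a prefix that is not all $1$'s, and you are done. Your attempted use of the reflection clause could not have worked anyway: the reflection of a tail beginning $1(0)^n$ is $(-1)(0)^n\cdots$, which is trivially $\prec(\delta_i(\alpha))$ since $\delta_1(\alpha)=1$, so no contradiction arises there. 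The case $(-1)(0)^n$ is then handled symmetrically via the reflection clause (or equivalently by reflecting the whole sequence).
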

\begin{proof}
Suppose $\alpha\in(\frac{3-\sqrt{5}}{2},1/2)$. Then by Lemma \ref{lem:22} and (\ref{eq:41}) we have
\begin{equation}
\label{eq:42}
(\delta_i(\alpha))\prec (1(0)^{\infty}).
\end{equation} For any $\alpha\in(1/3,1/2)$ we have $\delta_1(\alpha)=1$. Therefore by \eqref{eq:42} there exists $k\geq 0$ such that $(\delta_i(\alpha))$ begins with the word $1(0)^{k}(-1)$. If a sequence $(\epsilon_i)\in \U_\alpha$ contained the sequence $1(0)^{k+1}$ infinitely often, then it is a consequence of Lemma \ref{lem:21} for the digit set $\{-1,0,1\}$ that the following lexicographic inequalities would have to hold
\begin{equation}
\label{eq:43}
(-1)(0)^k1\preceq 1(0)^{k+1} \preceq 1(0)^{k}(-1).
\end{equation}
 Clearly the right hand side of \eqref{eq:43} does not hold, therefore $1(0)^{k+1}$ cannot occur infinitely often. Similarly, one can show that $(-1)(0)^{k+1}$ cannot occur infinitely often by considering the left hand side of \eqref{eq:43}.
\end{proof}

\begin{lemma}
\label{lem:42}
If $\alpha\in(1/3,\frac{3-\sqrt{5}}{2}]$ then for any sequence of natural numbers $(n_i)$ the sequence $$(1(-1))^{n_{1}} \,0 ^{n_{2}}\,(1(-1))^{n_{3}}\,0^{n_4}\cdots$$ is contained in $\U_{\alpha}$.
\end{lemma}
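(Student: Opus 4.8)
The plan is to verify that the displayed sequence belongs to $\U_\alpha$ by checking the two lexicographic conditions of Lemma \ref{lem:21} (in its $\{-1,0,1\}$ incarnation, obtained by the substitution $0\to-1,1\to0,2\to1$). First I would record what $(\delta_i(\alpha))$ looks like on the relevant range: by Lemma \ref{lem:22} and \eqref{eq:41}, for $\alpha\in(1/3,\frac{3-\sqrt5}{2}]$ we have $(\delta_i(\alpha))\succeq 1(0)^\infty$ and $\delta_1(\alpha)=1$; in fact I expect the crucial fact to be precisely $(\delta_i(\alpha))\succeq 1(0)^\infty = (\delta_i(\frac{3-\sqrt5}{2}))$, so that any tail of the candidate sequence beginning with $0$ or $(-1)$ is automatically dominated. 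Since Lemma \ref{lem:21} is symmetric under reflection and the candidate sequence $\mathbf t=(1(-1))^{n_1}0^{n_2}(1(-1))^{n_3}0^{n_4}\cdots$ is reflection-related to itself in a controlled way, it suffices to check the first inequality $(\epsilon_{n+i})\prec(\delta_i(\alpha))$ for every $n$ with $\epsilon_1\cdots\epsilon_n\ne 1^n$; the second then follows by the reflection symmetry.

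The heart of the argument is a case analysis on what digit the tail $(\epsilon_{n+i})_{i\ge1}$ starts with. If it starts with $(-1)$ or $0$, then $(\epsilon_{n+i})\prec 1(0)^\infty\preceq(\delta_i(\alpha))$ is immediate and we are done. So the only real work is when the tail starts with $1$. Because of the block structure of $\mathbf t$, a tail starting with $1$ must begin either inside a $(1(-1))$-block or exactly at the start of a new $(1(-1))$-block; in the first sub-case the tail starts $1(-1)\cdots$ and in the second it starts $1(-1)\cdots$ as well (possibly followed by $0$'s if the $(1(-1))$-block has length one and is followed by a $0$-block). In every sub-case the tail has the form $1(-1)\gamma$ or $1\,0\,\gamma$ for some continuation $\gamma$, and we must show $1(-1)\gamma\prec(\delta_i(\alpha))$ or $1\,0\,\gamma\prec(\delta_i(\alpha))$. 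Since $\delta_1(\alpha)=1$, this reduces to comparing $(-1)\gamma$ or $0\gamma$ with $(\delta_{i+1}(\alpha))$, i.e. to the inequality $(\delta_2(\alpha)\delta_3(\alpha)\cdots)\succ 0\cdots$ or $(-1)\cdots$. Here I would invoke the Parry-type admissibility of $(\delta_i(\alpha))$ from Lemma \ref{lem:22}: since $(\delta_i(\alpha))$ is infinite and $\delta_{k+1}\delta_{k+2}\cdots\preceq\delta_1\delta_2\cdots$ for all $k$, and since $(\delta_i(\alpha))\succ 1(0)^\infty$ forces $(\delta_i(\alpha))$ to begin $1(0)^{m}\delta_{m+2}\cdots$ with $\delta_{m+2}\ge1$ (in $\{-1,0,1\}$-terms the first digit after the initial run of $0$'s is $1$, not $-1$, because $(\delta_i(\alpha))$ strictly exceeds $1(0)^\infty$), the comparison goes through: the shifted expansion $\delta_2\delta_3\cdots$ dominates $0^\infty$ strictly, hence dominates any continuation beginning with $(-1)$ or with $0$ followed by what the block structure of $\mathbf t$ produces.

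The main obstacle I anticipate is making the last comparison completely airtight, because the candidate tail starting $1\,0\,\gamma$ could continue with a long run of $0$'s (from a large $n_{2j}$), and we need $(\delta_i(\alpha))$ to stay lexicographically above $1\,0^{n_{2j}}\cdots$ for \emph{all} $j$ simultaneously. This is exactly where the hypothesis $\alpha\le\frac{3-\sqrt5}{2}$ is used in the opposite direction to Lemma \ref{lem:41}: one shows that $(\delta_i(\alpha))$ is of the form $1(0)^m 1\cdots$ with $m$ possibly $0$ but the point being that after the initial $1$ and the run of $0$'s the next nonzero digit is $+1$, which dominates the $(-1)$ that inevitably appears in $\mathbf t$ after any run of $0$'s (since every $0$-block in $\mathbf t$ is followed by a $(1(-1))$-block, hence by the digits $1(-1)$, and $1\succ$ anything that could legally follow in $(\delta_i(\alpha))$ — wait, more carefully: after the $0$-run in $\mathbf t$ comes $1$, and we compare with $\delta$ which after its $0$-run has $1$ as well, so one must descend one more level). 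I would handle this by an induction on the block index, or more cleanly by observing that the candidate sequence, read from any position, is lexicographically bounded above by $(10^\infty)$-shifted-appropriately and using that $(\delta_i(\alpha))\succeq 1(0)^\infty$ with the quasi-greedy sequence never being eventually $0^\infty$ unless $\alpha=\frac{3-\sqrt5}{2}$, in which case a direct check finishes the boundary case. Once both Lemma \ref{lem:21} inequalities are established, membership in $\U_\alpha$ follows, and combined with Lemma \ref{lem:23} (choosing the $n_i$ to tune $\underline d$) this is what will ultimately yield Theorem \ref{th:12}(1) and Theorem \ref{th:13}(1).
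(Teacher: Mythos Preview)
Your overall strategy is right---check the tails against $(\delta_i(\alpha))$ via Lemma~\ref{lem:21}---but you manufacture a difficulty that is not there. In the sequence $(1(-1))^{n_1}0^{n_2}(1(-1))^{n_3}0^{n_4}\cdots$ every occurrence of the digit $1$ sits inside a $(1(-1))$-pair and is therefore \emph{immediately} followed by $-1$. There is no tail of the form $1\,0\,\gamma$; the case that you identify as the ``main obstacle'' simply does not occur. Once you drop it, the argument collapses to one line: any tail starts either with $0$ or $-1$ (trivially $\prec 1(0)^\infty$) or with $1(-1)$ (and $1(-1)\cdots\prec 1(0)^\infty$ since $-1<0$). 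By reflection symmetry the lower bound $(-1)(0)^\infty\prec(\epsilon_{n+i})$ holds for the same reason, and Lemma~\ref{lem:21} gives the result.

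The paper streamlines this further by first observing (from Lemmas~\ref{lem:21} and \ref{lem:22}) that $\U_{(3-\sqrt5)/2}\subset\U_\alpha$ for all $\alpha\in(1/3,\tfrac{3-\sqrt5}{2}]$, so it suffices to check membership at the single endpoint $\alpha=\tfrac{3-\sqrt5}{2}$, where $(\delta_i(\alpha))=1(0)^\infty$ exactly. This avoids having to reason about what $(\delta_i(\alpha))$ looks like for general $\alpha$ in the range, and the verification then reduces to the single strict inequality $(-1)(0)^\infty\prec(\epsilon_{n+i})\prec 1(0)^\infty$ for all $n\ge0$, which is immediate from the observation above.
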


\begin{proof}
Fix a sequence of natural numbers $(n_i)$. It is a consequence of Lemma \ref{lem:21} and Lemma \ref{lem:22} that $\U_{\frac{3-\sqrt{5}}{2}}\subset \U_{\alpha}$ for all $\alpha\in(1/3,\frac{3-\sqrt{5}}{2}).$ Therefore it suffices to show that the sequence $$(\epsilon_i)_{i=1}^{\infty}:=(1(-1))^{n_{1}}\,0^{n_{2}}\,(1(-1))^{n_{3}}\,0^{n_{4}}\cdots$$ is contained in $\U_{\frac{3-\sqrt{5}}{2}}.$  For all $n\geq 0$ the following lexicographic inequalities hold
$$(-1)(0)^{\infty}\prec (\epsilon_i)_{i=n+1}^{\infty}\prec 1(0)^{\infty}.$$ Applying Lemma \ref{lem:21} we see that $(\epsilon_i)\in {\U}_{\frac{3-\sqrt{5}}{2}}$ and our proof is complete.
\end{proof}

\begin{proof}[Proof of Theorem \ref{th:12}]
Let $\alpha\in(\frac{3-\sqrt{5}}{2},1/2)$ and let $N\in \mathbb{N}$ be as in Lemma \ref{lem:41}.  Now let us pick $a\in (\frac{N}{N+1},1).$ Any $(t_i)\in \U_\alpha$ with $\underline{d}((t_i))=a$ must contain either the sequence $1(0)^N$ infinitely often or $(-1)(0)^{N}$ infinitely often. By Lemma \ref{lem:41} this is not possible. Thus by Lemma \ref{lem:23} the set $D_\alpha$ is a proper subset of $[0,\frac{\log2}{-\log \alpha}]$ and statement $(2)$ of Theorem \ref{th:12} holds.

By Lemma \ref{lem:23} it remains to show that for any $\alpha\in(1/3,\frac{3-\sqrt{5}}{2}]$ and $a\in [0,1]$ there exists $(t_i)\in \U_{\alpha}$ such that $\underline{d}((t_i))=a.$ The existence of such a $(t_i)$ now follows from Lemma \ref{lem:42} by making an appropriate choice of $(n_i).$
\end{proof}

We now prove Theorem \ref{th:13}. To prove this theorem we require the following technical characterisation of $S_{\alpha}$ from \cite[Theorem 3.2]{Kong_Li_Dekking_2010}. We recall that an infinite sequence $(\omega_i)\in\{0,1\}^{\mathbb{N}}$ is called \emph{strongly eventually periodic} if $(\omega_i)=IJ^\infty$, where $I,J$ are two finite words of the same length and $I\preceq J.$ Clearly, a periodic sequence is strongly eventually periodic.

\begin{proposition}
\label{prop:43}
$t\in S_{\alpha}$ if and only if $(1-|t_i|)_{i=1}^{\infty}$ is strongly eventually periodic.
\end{proposition}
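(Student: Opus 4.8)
The plan is to understand exactly when $\G\cap(\G+t)$ is self-similar by analysing, via the description \eqref{eq:11}, what the set actually looks like for $t\in\u_\alpha$. For $t\in\u_\alpha$ with unique $\alpha$-expansion $(t_i)$, equation \eqref{eq:11} gives
$$
\G\cap(\G+t)=\Big\{\textstyle\sum_{i\ge 1}\epsilon_i\alpha^i:\epsilon_i\in\{0,1\}\cap(\{0,1\}+t_i)\Big\},
$$
and the key observation is that $\{0,1\}\cap(\{0,1\}+t_i)$ is a singleton when $t_i=\pm1$ (namely $\{1\}$ or $\{0\}$) and is $\{0,1\}$ when $t_i=0$. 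So the digit in coordinate $i$ is free precisely when $t_i=0$, i.e. precisely when $1-|t_i|=1$; otherwise it is forced. Thus $\G\cap(\G+t)$ is, up to an affine bijection, determined entirely by the binary sequence $\omega:=(1-|t_i|)_{i=1}^\infty\in\{0,1\}^{\mathbb N}$: it is a countable-type "Cantor set" obtained by prescribing digits where $\omega_i=0$ and leaving digits free where $\omega_i=1$. The proposition then amounts to showing that such a set is self-similar iff $\omega$ is strongly eventually periodic.

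First I would set up notation for the "$\omega$-set": given $\omega\in\{0,1\}^{\mathbb N}$ and a choice of forced digits (which, since $t\in\u_\alpha$, is itself constrained by Lemma \ref{lem:21}), write $E_\omega\subset[0,\tfrac{\alpha}{1-\alpha}]$ for the resulting set, and record that $E_\omega$ is a closed set which is a single point when $\omega$ has finitely many $1$'s and otherwise has $\dim_H E_\omega=\tfrac{\log 2}{-\log\alpha}\,\underline d(\omega)$ by Lemma \ref{lem:23}. For the "if" direction, suppose $\omega=IJ^\infty$ with $|I|=|J|=:p$ and $I\preceq J$. The point of the hypothesis $I\preceq J$ is that it forces the forced-digit pattern to be "compatible" under the shift by $p$ coordinates, so that after reading off the first $p$ coordinates the remaining set is an affine copy of $E_{J^\infty}$; and $E_{J^\infty}$ is genuinely self-similar because $J$ repeats, each block of $p$ coordinates contributing the same branching structure scaled by $\alpha^p$. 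Concretely I would exhibit the IFS: let $k=\#\{i\le p:J_i=1\}$, then $E_{J^\infty}$ satisfies $E_{J^\infty}=\bigcup_{j=1}^{2^k}g_j(E_{J^\infty})$ where each $g_j$ has ratio $\alpha^p$ and the translations run over the $2^k$ ways of filling the free slots in a $J$-block; one checks the strong separation condition from $\alpha<1/2$ (distinct prefixes of length $p$ give disjoint cylinders, exactly as for $\Gamma_\alpha$ itself). Then $E_\omega=E_{IJ^\infty}$ is the finite union of the $2^{\#\{i\le p:I_i=1\}}$ affine images of $E_{J^\infty}$ corresponding to the free slots in the initial block $I$; the inequality $I\preceq J$ (together with the univoque constraint) is what guarantees these images, and the self-similar pieces, fit together to again be a self-similar set rather than merely a graph-directed one. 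I would invoke the cited \cite[Theorem 3.2]{Kong_Li_Dekking_2010} here rather than reprove the separation bookkeeping in full.

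For the "only if" direction, suppose $\G\cap(\G+t)$ is self-similar, say equal to $\bigcup_{j} h_j(\G\cap(\G+t))$ with contraction ratios $r_j$; since $\G\cap(\G+t)$ lies inside the self-similar set $\Gamma_\alpha$ with ratios $\alpha$, the ratios $r_j$ are forced to be powers $\alpha^{m_j}$, and by passing to the least common period one reduces to a single scale $\alpha^m$. Reading the self-similarity at scale $\alpha^m$ says that after deleting the first $m$ coordinates the set $E_\omega$ decomposes into finitely many affine copies of itself; comparing which coordinates are free in $E_\omega$ versus in the shifted copy $E_{\sigma^m\omega}$ forces $\sigma^m\omega$ and $\omega$ to have "the same tail-structure" in a precise sense, which — again using that $t$ is univoque so the forced digits are rigidly determined by Lemma \ref{lem:21} — forces $\sigma^m\omega=J\sigma^{2m}\omega$-type relations and ultimately $\omega$ eventually periodic; the extra inequality $I\preceq J$ falls out because the first block $I$ must embed as one of the self-similar pieces of the periodic tail, which lexicographically is the "largest" piece. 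The main obstacle will be precisely this last direction: controlling the ratios $r_j$ (ruling out non-$\alpha$-power ratios, e.g. irrational or "accidental" ratios coming from coincidental algebraic relations among the $\alpha^i$) and turning "self-similar at scale $\alpha^m$" into the combinatorial statement about $\omega$, rather than a graph-directed or merely bi-Lipschitz statement. I expect the cleanest route is to quote \cite[Theorem 3.2]{Kong_Li_Dekking_2010} for the hard equivalence and to present here only the translation showing that the condition "$(1-|t_i|)$ strongly eventually periodic" is exactly the hypothesis appearing there, together with the computation of $\G\cap(\G+t)$'s generating IFS in the periodic case.
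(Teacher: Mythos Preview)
The paper does not prove Proposition~\ref{prop:43} at all: it is stated as a citation of \cite[Theorem~3.2]{Kong_Li_Dekking_2010}, with no argument given. So in a sense your final suggestion --- simply to quote that reference --- is exactly what the paper does, and nothing more is required here.

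That said, since you attempted a genuine sketch, a few comments on it. Your description of $\G\cap(\G+t)$ via the free/forced dichotomy governed by $\omega=(1-|t_i|)$ is correct and is the right starting point. In the ``if'' direction, however, your use of the hypothesis $I\preceq J$ is imprecise: you say it makes the finite union of affine copies of $E_{J^\infty}$ ``fit together to again be a self-similar set rather than merely a graph-directed one'', but a finite union of translates of a self-similar set is not in general self-similar, and lexicographic order on $I,J\in\{0,1\}^p$ does not obviously encode the needed containment of branching patterns. The actual mechanism (in \cite{Kong_Li_Dekking_2010}) is more delicate than what you wrote. In the ``only if'' direction you correctly flag the real obstacle --- ruling out contraction ratios that are not powers of $\alpha$ --- but your reduction ``passing to the least common period one reduces to a single scale $\alpha^m$'' is not justified (the $m_j$ need not have a common value, and an IFS with mixed powers $\alpha^{m_j}$ does not automatically refine to one with a single ratio), and the combinatorial extraction of eventual periodicity plus the inequality $I\preceq J$ from self-similarity is only gestured at. These are precisely the points where the argument in \cite{Kong_Li_Dekking_2010} does the work, so your instinct to defer to that reference is sound.
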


\begin{proof}[Proof of Theorem \ref{th:13}]
Statement $(2)$ of Theorem \ref{th:13} follows from the proof of Theorem \ref{th:12}. It is a consequence of our proof that for $\alpha\in(\frac{3-\sqrt{5}}{2},1/2)$ there exists $\epsilon>0$ such that $\underline{d}((t_i))\notin(1-\epsilon,1)$ for all $(t_i)\in \U_{\alpha}$. This statement when combined with Lemma \ref{lem:23} implies statement $(2)$ of Theorem \ref{th:13}.

To prove statement $(1)$ we remark that for any $\alpha\in(1/3,\frac{3-\sqrt{5}}{2}]$ and $n_1,\ldots, n_j\in\mathbb{N},$ the sequence
$$(t_i)=((1(-1))^{n_{1}}\,0^{n_{2}}\,(1(-1))^{n_3}\cdots (1(-1))^{n_{j-1}}\,0^{n_{j}})^{\infty}$$ is contained in $\U_\alpha.$ The sequence $(1-|t_i|)$ is strongly eventually periodic, therefore by Proposition \ref{prop:43} the corresponding $t$ is contained in $S_{\alpha}.$ For any $a\in [0,1]$ and $\epsilon >0,$ we can pick $n_{1},\ldots,n_j\in\mathbb{N}$ such that $|d((t_i))-a|<\epsilon.$ Applying Lemma \ref{lem:23} we may conclude that statement (1) of Theorem \ref{th:13} holds.
\end{proof}

\section{Examples}
We end our paper with some examples. We start with two examples of an $\alpha\in(1/3,1/2),$ and a $t\in \G-\G$ with a continuum of $\alpha$-expansions for which the Hausdorff dimension of $\G\cap (\G+t)$ is explicitly calculable. The approach given in the first example applies more generally to $\alpha$ the reciprocal of a Pisot number and $t\in \mathbb{Q}(\alpha).$ Our second example demonstrates that it is possible for $t$ to have a continuuum of $\alpha$-expansions and for $\G\cap (\G+t)$ to be a self-similar set.

\begin{example}
Let $\alpha=0.449\ldots$ be the unique real root of $2x^3+2x^2+x-1=0.$ Consider $t=\sum_{i=1}^{\infty}(-\alpha)^i.$ For this choice of $\alpha$ the set of $\alpha$-expansions of $t$ is equal to the allowable sequences of edges in Figure \ref{fig1} that start at the point $((-1)1)^{\infty}$.

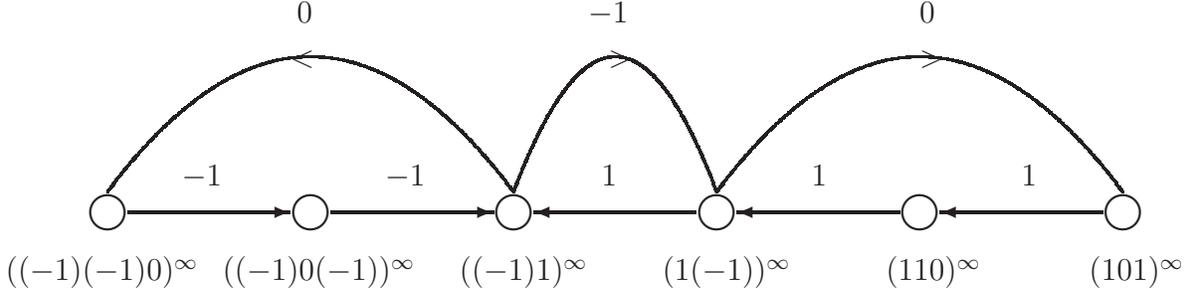
\begin{figure}
\setlength{\unitlength}{0.09cm}
\thicklines
\begin{picture}(190,45)(0,-10)
\put(20,2){\circle{5}}
\put(50,2){\circle{5}}
\put(80,2){\circle{5}}
\put(110,2){\circle{5}}
\put(140,2){\circle{5}}
\put(170,2){\circle{5}}

\put(102,-8){$(1(-1))^{\infty}$}
\put(72,-8){$((-1)1)^{\infty}$}
\put(135,-8){$(110)^{\infty}$}
\put(165,-8){$(101)^{\infty}$}
\put(5,-8){$((-1)(-1)0)^{\infty}$}
\put(37,-8){$((-1)0(-1))^{\infty}$}

\qbezier(80,5)(95,45)(110,5)
\put(94,23.5){$>$}
\qbezier(110,5)(140,45)(170,5)
\put(140,23.5){$>$}
\qbezier(20,5)(50,45)(80,5)
\put(47,23.5){$<$}

\put(23,2){\vector(1,0){24}}

\put(53,2){\vector(1,0){24}}

\put(107,2){\vector(-1,0){24}}

\put(137,2){\vector(-1,0){24}}

\put(167,2){\vector(-1,0){24}}

\put(31,6){$-1$}
\put(61,6){$-1$}
\put(93,6){$1$}
\put(124,6){$1$}
\put(155,6){$1$}
\put(91,30){$-1$}
\put(140,30){$0$}
\put(48,30){$0$}

\end{picture}
\caption{A graph generating all $\alpha$-expansions of $t=\sum_{i=1}^{\infty}(-\alpha)^i.$}
    \label{fig1}

\end{figure}

Using \eqref{eq:12} we see that $\G\cap (\G+t)$ coincides with those numbers $\sum_{i=1}^{\infty}\epsilon_i\alpha^i$ where $(\epsilon_i)$ is a sequence of allowable edges in Figure \ref{fig2} that start at $((-1)1)^{\infty}$.

\begin{figure}
\setlength{\unitlength}{0.09cm}
\thicklines
\begin{picture}(190,45)(0,-10)
\put(20,2){\circle{5}}
\put(50,2){\circle{5}}
\put(80,2){\circle{5}}
\put(110,2){\circle{5}}
\put(140,2){\circle{5}}
\put(170,2){\circle{5}}

\put(102,-8){$(1(-1))^{\infty}$}
\put(72,-8){$((-1)1)^{\infty}$}
\put(134,-8){$(110)^{\infty}$}
\put(163,-8){$(101)^{\infty}$}
\put(7,-8){$((-1)(-1)0)^{\infty}$}
\put(37,-8){$((-1)0(-1))^{\infty}$}

\qbezier(80,5)(95,45)(110,5)
\put(94,23.5){$>$}
\qbezier(110,5)(140,45)(170,5)
\put(140,23.5){$>$}
\qbezier(20,5)(50,45)(80,5)
\put(47,23.5){$<$}

\put(23,2){\vector(1,0){24}}

\put(53,2){\vector(1,0){24}}

\put(107,2){\vector(-1,0){24}}

\put(137,2){\vector(-1,0){24}}

\put(167,2){\vector(-1,0){24}}

\put(33,6){$0$}
\put(63,6){$0$}
\put(93,6){$1$}
\put(125,6){$1$}
\put(155,6){$1$}
\put(93,30){$0$}
\put(138,30){$0/1$}
\put(46,30){$0/1$}

\end{picture}
\caption{A graph generating $\G\cap (\G+t)$}
    \label{fig2}

\end{figure}

We let
 $$C_n:=\Big\{(\epsilon_i)_{i=1}^n\in\{0,1\}^n: \Big[\sum_{i=1}^{n}\epsilon_i\alpha^i,\sum_{i=1}^{n}\epsilon_i\alpha^i+ \frac{\alpha^{n+1}}{1-\alpha}\Big]\bigcap (\G\cap (\G+t))\neq\emptyset\Big\}.$$ Given $\delta_1\cdots\delta_m\in C_m$ we let
 $$C_{n}(\delta_1\cdots\delta_m):=\Big\{(\epsilon_i)_{i=1}^{m+n}\in C_{m+n}: (\epsilon_1,\ldots,\epsilon_m)=(\delta_1,\ldots, \delta_m)\Big\}.$$ Making use of standard arguments for transition matrices it can be shown that there exists $c>0$ such that
\begin{equation}\label{eq:51}
\frac{\lambda^{n}}{c}\leq \#C_{n}\leq c \lambda^n \textrm{ and }\frac{\lambda^{n}}{c}\leq \#C_{n}(\delta_1\cdots\delta_m)\leq c \lambda^n,
\end{equation} for any $\delta_1\cdots\delta_m\in C_m.$ Here $\lambda\approx 1.69562\ldots$ is the unique maximal eigenvalue of the matrix

\[ A=\left( \begin{array}{cccccc}
0 & 1 & 0 & 0 & 0 & 0\\
0 & 0 & 1 & 0 & 0 & 0\\
2 & 0 & 0 & 1 & 0 & 0\\
0 & 0 & 1 & 0 & 0 & 2\\
0 & 0 & 0 & 1 & 0 & 0\\
0 & 0 & 0 & 0 & 1 & 0\\ \end{array} \right).\]
In the following we will show that
\begin{equation}
  \label{eq:52}
  \dim_H(\Gamma_\alpha\cap(\Gamma_\alpha+t))=\frac{\log\lambda}{-\log\alpha}\approx 0.644297.
\end{equation}
In fact we   show that  $0<\mathcal{H}^{\frac{\log \lambda}{-\log \alpha}}(\G\cap (\G+t))<\infty.$
By (\ref{eq:51}) the upper bound follows from the following straightforward argument:
\begin{align*}\mathcal{H}^{\frac{\log \lambda}{-\log \alpha}}(\G\cap (\G+t))&\leq \liminf_{n\to \infty}\sum_{(\epsilon_i)\in C_n} Diam\Big(\big[\sum_{i=1}^{n}\epsilon_i\alpha^i,\sum_{i=1}^{n}\epsilon_i\alpha^i+ \frac{\alpha^{n+1}}{1-\alpha}\big]\Big)^{\frac{\log \lambda}{-\log \alpha}}\\
&\leq c\lambda^n \Big(\frac{\alpha^{n+1}}{1-\alpha}\Big)^{\frac{\log \lambda}{-\log \alpha}}\\
& <\infty
\end{align*}In what follows we use the notation $\mathcal{I}_n$ to denote the basic intervals corresponding to the elements of $C_n,$ and  $\mathcal{I}_{n}(\delta_1\cdots\delta_m)$ to denote the basic intervals corresponding to elements of $C_{n}(\delta_1\cdots\delta_m).$

The proof that $\mathcal{H}^{\frac{\log \lambda}{-\log \alpha}}(\G\cap (\G+t))>0$ is based upon arguments given in \cite{Baker_2014_1} and Example $2.7$ from \cite{Falconer_1990}. Let $\{U_j\}_{j=1}^{\infty}$ be an arbitrary cover of $\G\cap (\G+t).$ Since $\G\cap (\G+t)$ is compact we can assume that $\{U_j\}_{j=1}^{p}$ is a finite cover.  For each $U_j$ there exists $l(j)\in\mathbb{N}$ such that $\frac{\alpha^{l(j)+1}}{1-\alpha}<Diam(U_j)\leq \frac{\alpha^{l(j)}}{1-\alpha}.$ This implies that $U_j$ intersects at most two elements of $ \mathcal{I}_{l(j)} $.
This means that for each $j$ there exists at most two codes $(\epsilon_{1},\ldots,\epsilon_{l(j)}),(\epsilon_{1}',\ldots,\epsilon_{l(j)}')\in C_{l(j)}$ such that
\[U_j\cap \Big[\sum_{i=1}^{l(j)}\epsilon_i \alpha^i, \sum_{i=1}^{l(j)}\epsilon_i \alpha^i +\frac{\alpha^{l(j)}}{1-\alpha}\Big]\neq \emptyset\quad\textrm{ and}\quad U_j\cap \Big[\sum_{i=1}^{l(j)}\epsilon_i' \alpha^i, \sum_{i=1}^{l(j)}\epsilon_i' \alpha^i +\frac{\alpha^{l(j)}}{1-\alpha}\Big]\neq \emptyset.\]
 Without loss of generality we may assume that $U_j$ always intersects at least one element of $\mathcal{I}_{l(j)}$. Since $\{U_i\}_{i=1}^p$ is a finite cover there exists $J\in\mathbb{N}$ such that $\alpha^{J}<Diam(U_i)$ for all $i$.  By (\ref{eq:51}) the following inequalities hold by counting arguments:

\begin{align*}
\frac{\lambda^J}{c}\leq \#C_J & \leq \sum_{j=1}^p \#\Big\{(\epsilon_i)\in C_J: \Big[\sum_{i=1}^{J}\epsilon_i\alpha^i,\sum_{i=1}^{J}\epsilon_i\alpha^i+ \frac{\alpha^{J+1}}{1-\alpha}\Big]\cap U_j \neq \emptyset\Big\}\\
&\leq \sum_{j=1}^p \#C_{J-l(j)}(\epsilon_{1}\cdots\epsilon_{l(j)}) +\sum_{j=1}^m \#C_{J-l(j)}(\epsilon_{1}'\cdots\epsilon'_{l(j)}) \\
&\leq 2c\sum_{j=1}^{p}\lambda^{J-l(j)} \\
&\leq 2c\sum_{j=1}^{p}\lambda^J\cdot \alpha^{-l(j)\frac{\log \lambda}{-\log \alpha}}.
\end{align*}
Cancelling through by $\lambda^J$ we obtain $(2c^2)^{-1} \leq \sum_{j=1}^{p} \alpha^{-l(j)\frac{\log \lambda}{-\log \alpha}}.$ Since $Diam(U_j)$ is $\alpha^{l(j)}$ up to a constant term we may deduce that $\sum_{j=1}^{p}Diam(U_j)^{\frac{\log \lambda}{-\log \alpha}}$ can be bounded below by a strictly positive constant that does not depend on our choice of cover. This in turn implies $\mathcal{H}^{\frac{\log \lambda}{-\log \alpha}}(\G\cap (\G+t))>0.$

By \eqref{eq:12} we know that
\begin{equation}
\label{eq:53}
\dim_{H}(\G\cap (\G+t)) \geq \sup_{\tilde{t}} \dim_{H}\Big\{ \sum_{i=1}^{\infty}\epsilon_i\alpha^i:\epsilon_i\in \{0,1\}\cap (\{0,1\}+\tilde{t}_i)\Big\},
\end{equation} where the supremum is over all $\alpha$-expansions of $t$. If $t$ has countably many $\alpha$-expansions, then by the countable stability of the Hausdorff dimension we would have equality in \eqref{eq:53}. In the case where $t$ has a continuum of $\alpha$-expansions it is natural to ask whether equality persists. This example shows that this is not the case. Upon examination of Figure \ref{fig1} we see that any $\alpha$-expansion  of $((-1)1)^{\infty}$ satisfies $\underline{d}((t_i))\leq 1/3.$ In which case the right hand side of \eqref{eq:53} can be bounded above by $\frac{1}{3}\frac{\log 2}{-\log \alpha}\approx 0.281914.$ However by (\ref{eq:52}) this quantity is strictly less than our calculated dimension $\frac{\log \lambda}{-\log \alpha}\approx 0.644297.$

\end{example}

\begin{example}
Let $\alpha= \sqrt{2}-1 $ and $t=\frac{1}{\alpha(\alpha^3-1)}+\frac{1}{\alpha^2(1-\alpha^3)}.$ Then a simple calculation demonstrates that the set of $\alpha$-expansions of $t$ is precisely the set $\{ 0(-1)(-1),(-1)10\}^{\mathbb{N}}.$ Applying \eqref{eq:12} we see that
$$\G\cap (\G+t)=\Big\{\sum_{i=1}^{\infty}\epsilon_i\alpha^i:(\epsilon_i)\in \{100,000,010,011\}^{\mathbb{N}}\Big\}$$ This last set is clearly a self-similar set generated by four contracting similitudes of the order $\alpha^3$. This self-similar set satisfies the strong separation condition. So
\[\dim_{H}(\G\cap (\G+t))=\frac{\log 4}{-3\log \alpha}.\]
 Each $\alpha$-expansion of $t$ satisfies $d((t_i))=1/3$. Thus the right hand side of \eqref{eq:53} can be bounded above by $\frac{\log 2}{-3\log \alpha}.$ Thus this choice of $\alpha$ and $t$ gives another example where we have strict inequality within \eqref{eq:53} .

\end{example}

We now give an example of an $\alpha\in(1/3,1/2)$ and $t\in \G-\G$ for which $\G\cap (\G+t)$ contains only transcendental numbers. For $\alpha\in (0,1/3]$ examples are easier to construct, however, when $\alpha\in(1/3,1/2)$ the problem of multiple codings arises and a more delicate approach is required. Our examples arise from our proof of Theorem \ref{th:12} and make use of ideas from the well known construction of Liouville.

We call a number $x\in\mathbb{R}$ a Liouville number if for every $\delta>0$ the inequality $$|x-p/q|\leq q^{-(2+\delta)}$$ has infinitely many solutions. An important result states that every Liouville number is a transcendental number \cite{Bugeaud_2004}. This result will be critical in what follows.

\begin{example}
Let $p/q\in(1/3,\frac{3-\sqrt{5}}{2})$. Then there exists $t\in \u_{p/q}$ such that $\G\cap(\G+t)$ only contains Liouville numbers. For any sequences of integers $(n_k)_{k=1}^{\infty}$ the sequence $$(t_i)=(1(-1))^{n_{1}}\,0\,(1(-1))^{n_{2}}\,0\cdots$$
 is contained in $\U_{p/q}.$ Now let $(n_k)$ be a rapidly increasing sequence of integers such that
\begin{equation}
\label{eq:54}
\Big(\frac{q}{p}\Big)^{2n_{1}+\cdots+2n_{k+1}+k+1}\geq q^{k(2n_1+\cdots 2n_k+k+3)}
\end{equation} Let $x\in \Gamma_{p/q}\cap (\Gamma_{p/q}+t),$ then $x=\sum_{i=1}^{\infty}\epsilon_i\Big(\frac{p}{q}\Big)^{i}$ where $\epsilon_i=1$ if $t_{i}=1,$ $\epsilon_i=0$ if $t_i=-1,$ and $\epsilon_i\in\set{0,1}$ if $t_i=0.$ It follows from our choice of $(t_i)$ that $$(\epsilon_i)=(10)^{n_{1}}\epsilon_{2n_{1}+1}(10)^{n_{2}}\epsilon_{2n_{1}+2n_{2}+2}\cdots.$$

For each $k\in\mathbb{N}$ we consider the rational
\begin{equation}
\label{eq:55}
\frac{p_k}{q_k}:=\sum_{i=1}^{2n_{1}+\cdots +2n_{k}+k}\epsilon_i\Big(\frac{p}{q}\Big)^{i}+\Big(\frac{p}{q}\Big)^{2n_{1}+\cdots +2n_{k}+k+1}\sum_{i=0}^{\infty}(p/q)^{2i},
\end{equation}
where $p_k$ and $q_k$ are coprime.
 Either the block $00$ or $11$ occurs infinitely often within $(\epsilon_i)$. So $p_{k}/q_{k}\neq x$. Importantly, if we expand the right hand side of (\ref{eq:55}) we can bound the denominator by
\begin{equation}
\label{denominator bound}
q_{k}\leq q^{2n_{1}+\cdots +2n_k+k+3}.
\end{equation}The $p/q$-expansion on $p_{k}/q_{k}$ agrees with that of $x$ upto the first $(2n_{1}+\cdots +2n_{k+1}+k)$ position. Therefore
\begin{equation}
\label{eq:56}
|x-p_k/q_k|\leq c\cdot  \Big(\frac{p}{q}\Big)^{2n_{1}+\cdots +2n_{k+1}+k+1}
\end{equation} for some constant $c$. Combining \eqref{eq:54}, \eqref{denominator bound}, and \eqref{eq:56} we see that for each $k\in \mathbb{N}$ $$|x-p_k/q_k|\leq c q_{k}^{-k}.$$ Therefore $x$ is a Liouville number. Since $x$ was arbitrary, every $x\in \Gamma_{p/q}\cap(\Gamma_{p/q}+t)$ is Liouville.

\end{example}

\section*{Acknowledgements}
The authors are grateful to Wenxia Li for being a good source of discussion and for his generous hospitality.


\end{document}